\subjclass[2010]
\newtheorem{theorem}{Theorem}[section]
\newtheorem{lemma}[theorem]{Lemma}
\newtheorem*{lemma*}{Lemma}
\newtheorem{proposition}[theorem]{Proposition}
\newtheorem{corollary}[theorem]{Corollary}
\theoremstyle{definition}
\theoremstyle{remark}
\newtheorem{remark}[theorem]{Remark}
\numberwithin{equation}{section}
\newcommand{\abs}[1]{\lvert#1\rvert}
\def\XXint#1#2#3{{\setbox0=\hbox{$#1{#2#3}{\int}$}
\vcenter{\hbox{$#2#3$}}\kern-.5\wd0}}
\def\le{\leqslant}
\def\ge{\geqslant}
\begin{document}

\title[Harmonic q.c. mappings between $\mathscr{C}^1$ smooth Jordan domains]{Harmonic quasiconformal mappings between $\mathscr{C}^1$ smooth Jordan domains}

\author{David Kalaj}
\address{Faculty of Natural Sciences and
Mathematics, University of Montenegro,  Cetinjski put b.b. 81000 Podgorica, Montenegro}
\email{davidkalaj@gmail.com}
\subjclass[2000]{Primary 30C62; Secondary 30C20, 31A20 }
\keywords{Harmonic mappings, quasiconformal mappings, smooth domains}

\begin{abstract}
 We prove the following result. If $f$ is a harmonic  quasiconformal mapping between two Jordan domains $D$ and  $\Omega$ having  $\mathscr{C}^1$ boundaries, then the function $f$ is globally H\"older continuous for every $\alpha<1$ but it is not necessarily Lipschitz in general. This result extends and improves a classical theorem of S. Warschawski for conformal mappings.
\end{abstract}

\maketitle

\section{Introduction}

Let $U$ and $V$ be two domains in the complex plane $\mathbf{C}$. We say that a twice differentiable mapping $f=u+iv:U\to V$ is harmonic if $\Delta f:=\Delta u + i\Delta v=0$ in $U$. Any harmonic homeomorphism is by Lewy's theorem a diffeomorphism. If its Jacobian $J_f=|f_z|^2-|f_{\bar z}|^2$ is positive, then it is a sense-preserving.

We say that a function $u:D\to \mathbf R$ is ACL (absolutely
continuous on lines) in  the region $D$, if for every closed
rectangle $R\subset D$ with sides parallel to the $x$ and $y$-axes,
$u$ is absolutely continuous on a.e. horizontal and a.e. vertical
line in $\mathbf R$. Such a function has partial
derivatives $u_x$, $u_y$ a.e. in $D$.

A sense-preserving homeomorphism  $w\colon D\to \Omega, $ where $D$
and $\Omega$ are subdomains of the complex plane $\mathbf C,$ is
said to be $K$-quasiconformal ($K$-q.c), { with} $K\ge 1$, if $w$ is
ACL in $D$ in the sense that its real and imaginary part are ACL in
D, and
\begin{equation}\label{defqc} |D w|\le K
l(D w)\ \ \ \text{a.e. on $D$},\end{equation} (cf. \cite{Ahl},
pp. 23--24).
Here $A=D(w)$ is the formal differential matrix defined by $$A=\left(
                                                                 \begin{array}{cc}
                                                                   u_x & u_y \\
                                                                   v_x & v_y \\
                                                                 \end{array}
                                                               \right),$$ and $$|A|=\max_{|h|=1}|Ah|,\ \ \ l(A)=\min_{|h|=1}|Ah|,$$ where $|\cdot|$ is the Euclidean norm.
Notice that the condition (\ref{defqc}) can be written
as
$$|w_{\bar z}|\le k|w_z|\quad \text{a.e. on $D$ where
$k=\frac{K-1}{K+1}$ i.e. $K=\frac{1+k}{1-k}$ }.$$

The class of quasiconformal harmonic mappings has been firstly considered by O. Martio in  \cite{martio}. The class of q.c. harmonic mappings contains conformal mappings, and this is why the class has shown a large interest for experts in geometric function theory.

We should mention here the following result of Pavlovi\'c \cite{MP} which states that a harmonic quasiconformal mapping of the unit disk $\mathbf{D}$ onto itself is bi-Lipschitz continuous. In order to explain the importance of his result let us state the following two separate results. If we assume that the mapping $f:\mathbf{D}\to \mathbf{D}$ is merely quasiconformal, then it is only H\"older continuous with the H\"older coefficient $\alpha = \frac{1-k}{1+k}$. This is the celebrated Mori's theorem.  On the other hand, if $f:\mathbf{D}\to \mathbf{D}$ is merely a harmonic diffeomorphism, then by a result of Hengartner and Schober it has  a continuous extension up to the boundary (see \cite[Theorem~4.3]{hs3} or \cite[Sec.~3.3]{duren}). However, in view of Rad\'o-Kneser-Choquet theorem, this is the best regularity that such a mapping can have at the boundary.

 We define the Poisson kernel by $$P(z,\theta)=\frac{1}{2\pi}\frac{1-|z|^2}{|z-e^{i\theta}|^2},\ \ \ |z|<1, \theta \in [0,2\pi).$$  For a mapping $f\in L^1(\mathbf{T})$, where $\mathbf{T}$ is the unit circle, the Poisson integral is defined by $$w(z) = P[f](z) =\int_{0}^{2\pi} P(z,\theta) f(e^{i\theta})d\theta.$$ The well-known Rad\'o-Kneser-Choquet theorem  states. If $f$ is a homeomorphism of the unit circle onto a convex Jordan curve $\gamma$, then its Poisson integral is a harmonic diffeomorphism of the unit disk $\mathbf{D}$ onto the Jordan domain $\Omega$ bounded by $\gamma$.

A special case is when $\gamma=\mathbf{T}$. E. Heinz has proved that, if $f$ is a harmonic diffeomorphism  of the unit disk onto itself, then the Hilbert-Schmidt norm of its derivative: \begin{equation}\label{deri}\|Df\|^2=|f_x|^2+|f_y|^2\ge c,\end{equation} where $c>0$ depends only on $f(0)$. It follows from \eqref{deri}, that the inverse of a quasiconformal harmonic mapping of the unit disk onto itself is Lipschitz continuous. So the main achievement of Pavlovi\'c in \cite{MP} (see also \cite{lib}), was to prove that a harmonic quasiconformal mapping of the unit disk onto itself is Lipschitz continuous on the closure of the domain.

In order to formulate some additional results in this topic recall that a rectifiable Jordan curve is $\mathscr{C}$, Dini smooth, $\mathscr{C}^{1,\alpha}$, for $\alpha\in(0,1]$ if its arch-length parametrisation $g:[0,|\gamma|]\to \gamma$ is $\mathscr{C}^1$, Dini smooth and $\mathscr{C}^{1,\alpha}$ respectively. Here $|\gamma|$ is the length of $\gamma$.

In \cite{kamz}, the author proved that, every quasiconformal harmonic mapping between Jordan domains with $\mathscr{C}^{1,\alpha}$ boundaries is Lipschitz continuous on the closure of domain. Later this result has been extended to Jordan domains with only Dini smooth boundaries \cite{pacific}.

A bi-Lipschitz property for harmonic quasiconformal mappings of the half-plane onto itself has been established by the author and Pavlovi\'c in \cite{plane}.

Further it has been  shown in \cite{pisa} that a quasiconformal harmonic mappings between $\mathscr{C}^{1,1}$ (not-necessarily convex) Jordan domains is bi-Lipschitz continuous. The same conclusion is obtained in \cite{abu} by Bo\v zin and Mateljevi\'c for the case of  $\mathscr{C}^{1,\alpha}$ Jordan domains. Further results in two dimensional case can be found in \cite{newj}. Some results concerning the  several-dimensional case can be found in \cite{astala}, \cite{kalajarsen} and \cite{matvuo}. For a different setting concerning the class of quasiconformal harmonic mappings we refer to the papers \cite{chen,vesna, zhu}. For example the article  \cite{vesna}  deals with the following problem of the class of quasiconformal harmonic mappings.

The quasi-hyperbolic metric $d_h$ in a domain $D$ of complex plane is defined as follows. For each $z_1, z_2\in D$, $$d_h(z_1,z_2)=\inf\int_{\gamma} d(z,\partial D)^{-1}|dz|,$$ where the infimum is taken over all rectifiable arcs $\gamma$ joining $x_1$ and $x_2$ in $D$. V. Manojlovi\'c in \cite{vesna} proved the following theorem: if $f:D\to D'$ is a quasiconformal and harmonic mapping, then it is bi-Lipschitz with respect to quasihyperbolic metrics on $D$ and $D'$.

In order to formulate the main theorem of this paper let us define the chord-arc curves.  A  rectifiable Jordan curve  $\gamma$  is a $B-$chord-arc curve if $L_\gamma(z_1, z_2) \le
B|z_1 - z_2|$ for all $z_1$, $z_2 \in  \gamma$, where $L_\gamma(z_1, z_2)$ denotes the length of the shortest arc of $\gamma$
joining $z_1$ and $z_2$. Here $B\ge 1$.
\begin{theorem}\label{teoml}
 Let $D$ and $\Omega$ be  Jordan domains  having  $\mathscr{C}^1$ boundaries and assume that $a\in D$ and $b\in \Omega$. Assume that $\omega_D$ ($\omega_\Omega$) is the modulus of continuity of the derivative of arc-length parametrisation of $\partial D$ ($\partial\Omega$). Assume further that $\partial D$ and $\partial \Omega$ satisfy $B-$arc-chord condition for some $B\ge 1$.  Then for every $\alpha\in (0,1)$ and $k\in[0,1)$, there  is a constant $M_\alpha=M_\alpha(a,b,k,B, \omega_D,\omega_\Omega)$ so that every harmonic $K=(1+k)/(1-k)-$quasiconformal mapping $f=g+\bar h$ of $D$ onto $\Omega$ so that $f(a)=b$ satisfy the condition  \begin{equation}\label{holderw}|f(z) -f(w)|\le M_\alpha|z-w|^\alpha,\ \ z,w\in D.\end{equation} Moreover for every $p>0$, there is a constant $B_p$, that depends on the same parameters as $M_\alpha$ so that \begin{equation}\label{hardy}\int_{D}|Df(z)|^{ p}\mathrm{d}\lambda(z)\le B^p_p,\end{equation} where $|Df(z)|=|f_z|+|f_{\bar z}|=|g'|+|h'|$. In other words $g', h'$ belong to the Bergman space $\mathcal{A}^p$ for every $p>0$. Here $\lambda$ is the Legesgue's measure in the plane.
\end{theorem}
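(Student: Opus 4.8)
The plan is to reduce the statement to a sharp integrability estimate for the boundary correspondence on the unit disc, and then to exploit the $\mathscr{C}^1$-regularity of $\D\Omega$ through the conjugate function (Hilbert transform). First I would pass to the disc as source: let $\phi\colon\DD\to D$ be a conformal map with $\phi(0)=a$. By Warschawski's classical theorem for $\mathscr{C}^1$ Jordan domains — the very result being extended here — both $\phi$ and $\phi^{-1}$ extend to the closures and lie in $\mathscr{C}^{0,\beta}$ for every $\beta<1$, with norms depending only on $\omega_D,B,a$. The map $F:=f\circ\phi\colon\DD\to\Omega$ is harmonic (a harmonic map composed with a holomorphic one) and $k$-quasiconformal, $F(0)=b$, and $f=F\circ\phi^{-1}$; hence it suffices to prove \eqref{holderw} with $D$ replaced by $\DD$, after which \eqref{hardy} follows because a harmonic map that is $\mathscr{C}^{0,\alpha}$ on $\overline D$ satisfies the Hardy--Littlewood bound $|Df(z)|\le C\,\dist(z,\D D)^{\alpha-1}$ and $\int_D\dist(z,\D D)^{(\alpha-1)p}\,\mathrm{d}\lambda<\infty$ once $\alpha>1-1/p$. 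One cannot straighten the target as well: for $\chi\colon\Omega\to\DD$ conformal, $\chi\circ F$ is merely quasiconformal and Mori's theorem gives only the exponent $\frac{1-k}{1+k}$, which does not tend to $1$; the decomposition $F=G+\overline H$ into holomorphic and antiholomorphic parts must be retained.

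Write $F=G+\overline H$ with $G,H$ holomorphic; since $J_F=|G'|^2-|H'|^2>0$, $G'$ has no zero and $\mu:=H'/G'$ is holomorphic with $|\mu|\le k$. As $F$ extends to a homeomorphism $\overline{\DD}\to\overline\Omega$, we have $F=P[\Phi]$ for the boundary homeomorphism $\Phi\colon\T\to\D\Omega$; parametrising $\D\Omega$ by arclength via $\gamma$ and writing $\Phi(e^{it})=\gamma(\tau(t))$ with $\tau$ increasing, the $B$-arc-chord condition gives $|\Phi(e^{it})-\Phi(e^{is})|\asymp|\tau(t)-\tau(s)|$. By the classical bound controlling the modulus of continuity of a Poisson integral by that of its boundary data, \eqref{holderw} for $F$ follows once $\tau\in\mathscr{C}^{0,\alpha}$ for every $\alpha<1$, so everything rests on
\begin{equation}\label{pf-star}
\sigma:=\tau'\in L^{p}(\T)\quad\text{for every }p<\infty,\qquad\|\sigma\|_{L^{p}}\le C\big(p,k,B,\omega_{D},\omega_{\Omega},a,b\big).
\end{equation}
One also checks that $\Phi$ is absolutely continuous: $F\in W^{1,2}(\DD)$ because $\int_{\DD}|DF|^{2}\asymp\int_{\DD}J_{F}=\lambda(\Omega)$ by quasiconformality and injectivity, and a monotone boundary trace of such a harmonic map carries no singular part.

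To prove \eqref{pf-star}: differentiating $F=G+\overline H$ along $\T$ gives $\dot\Phi(t)=i\big(e^{it}G'(e^{it})-\overline{e^{it}H'(e^{it})}\big)=\sigma(t)\gamma'(\tau(t))$, whence (using $|H'|\le k|G'|$ on $\T$) $\sigma(t)\asymp|G'(e^{it})|$ together with the argument identity
\begin{equation}\label{pf-arg}
\arg G'(e^{it})=\big(\arg\gamma'(\tau(t))-t-\tfrac{\pi}{2}\big)+e(t),\qquad|e(t)|\le\arcsin k .
\end{equation}
Since $G'\neq0$, $\log G'$ is holomorphic, so $\log|G'|=\log|G'(0)|-\mathcal H\big(\arg G'(e^{i\cdot})\big)$ with $\mathcal H$ the Hilbert transform; hence by \eqref{pf-arg}
$$|G'(e^{it})|^{p}=|G'(0)|^{p}\,\exp\!\big(-p\,\mathcal H[\arg\gamma'(\tau(\cdot))-\cdot\,](t)\big)\,\exp\!\big(-p\,\mathcal H e(t)\big),$$
where $|G'(0)|$ is bounded above and below in terms of $\dist(b,\D\Omega),\diam\Omega,k$ by Schwarz- and Heinz-type estimates. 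The key point is that $t\mapsto\arg\gamma'(\tau(t))-t$ is continuous and $2\pi$-periodic — this is exactly where $\D\Omega\in\mathscr{C}^1$ is used, $\gamma'$ being continuous with modulus $\omega_\Omega$ — and, once an a priori Hölder bound on $\tau$ is known, has an explicit modulus of continuity; its conjugate function therefore lies in $\mathrm{VMO}$, and since a continuous function splits as bounded plus arbitrarily small $\mathrm{BMO}$, the John--Nirenberg inequality makes the corresponding exponential integrable for \emph{every} $p$, with norm governed by $\omega_\Omega,B$ and the a priori bound.

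The main obstacle is the dilatation factor $\exp(-p\,\mathcal H e)$: from $\|e\|_\infty\le\arcsin k$ and $\mathcal H\colon L^{\infty}\to\mathrm{BMO}$ one gets integrability only for $p\lesssim 1/\arcsin k$, far short of every $p$, so one must show that $e$ — equivalently $\arg G'(e^{i\cdot})$ — itself lies in $\mathrm{VMO}$ with controlled modulus. I expect this to be handled by a bootstrap: start from the Hölder exponent $\beta_0=\beta_0(k,\Omega)$ furnished by Mori-type distortion estimates for quasiconformal maps between the quasidiscs $\DD$ and $\Omega$; feed it through \eqref{pf-arg} to control the modulus of $\arg\gamma'(\tau(\cdot))-\mathrm{id}$, hence through the displayed identity to improve the integrability of $\sigma$ and so the modulus of $\tau$; and iterate towards exponent $1$, using the analytic form $e(t)=-\arg\!\big(1-\overline{\mu(e^{it})}\,\overline{G'(e^{it})}/G'(e^{it})\big)$ together with the improved regularity to keep $e\in\mathrm{VMO}$ at each stage. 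Once \eqref{pf-star} is established, $\tau$ is $\mathscr{C}^{0,1-1/p}$ for every $p$, hence $\mathscr{C}^{0,\alpha}$ for every $\alpha<1$; the arc-chord condition transfers this to $\Phi$, the Poisson-integral estimate upgrades it to $F\in\mathscr{C}^{0,\alpha}(\overline{\DD})$, composition with $\phi^{-1}$ yields \eqref{holderw}, and \eqref{hardy} follows as in the first paragraph.
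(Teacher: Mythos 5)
Your route — controlling the boundary derivative $\tau'$ through $\log|G'|=\log|G'(0)|-\mathcal H[\arg G']$ and exponential integrability of conjugate functions — is genuinely different from the paper's argument, but it has a real gap exactly at the point where the harmonic (as opposed to conformal) structure must be handled. The $\mathscr{C}^1$ hypothesis only controls the \emph{sum} $\arg G'(e^{it})+e(t)=\arg\gamma'(\tau(t))-t-\tfrac{\pi}{2}$; it gives no separate control of $e(t)=\arg\bigl(1-\overline{e^{it}H'}/(e^{it}G')\bigr)$, because the boundary values of the dilatation $\omega=H'/G'$ are merely bounded measurable functions with $|\omega|\le k$, and nothing in your sketch prevents $\arg G'$ and $e$ from oscillating wildly in opposite directions while their sum stays continuous. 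The bootstrap you propose does not close: improving the H\"older exponent of $\tau$ improves the modulus of continuity of $\arg\gamma'(\tau(\cdot))$, i.e.\ again only of the sum, so no iteration step ever places $e$ (equivalently $\arg G'$) in $\mathrm{VMO}$. As you yourself note, the bare bound $\|e\|_\infty\le\arcsin k$ caps the exponent at roughly $p<\pi/(2\arcsin k)$, so the central claim that $\tau'\in L^p(\T)$ for \emph{every} $p$, with controlled norm, is left unproven; everything after it (H\"older continuity of $\Phi$, of $F$, and then \eqref{holderw} and \eqref{hardy}) therefore hangs in the air. A further unaddressed point is that your representation of $\log|G'|$ by the conjugate of its boundary argument presupposes that $G'$ has no singular inner factor, which also requires justification in this setting.

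It is worth noting that the conclusion you aim for is strictly stronger than what the paper proves in general: the Hardy-space statement $g'\in H^p(\DD)$ for all $p$ is obtained in Theorem~\ref{mundet} only under the extra hypothesis that $h$ extends holomorphically across $\T$ (which is what makes $\arg$ of the relevant analytic completion continuous up to the boundary and lets the conjugate-function argument run for every $p$); in the general case the paper abandons the boundary $L^p$ route entirely and instead works with the weighted quantity $\sup_{|z|<1}(1-|z|)^\alpha|i(h'-g')|$, uses Lemma~\ref{lomba} to pass between this Bloch-type norm and the boundary H\"older constant, absorbs the main term by choosing $\epsilon$ small in the local graph representation of the $\mathscr{C}^1$ curve, and then removes the a priori smoothness by an approximation argument — obtaining only the Bergman (area) integrability \eqref{hardy}. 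So to salvage your approach you would either need a genuinely new argument giving $\mathrm{VMO}$ control of $e$, or you should weaken the target of your key step to something the conjugate-function method can actually deliver for all $p$. (Minor remarks: your reduction of \eqref{hardy} to \eqref{holderw} via the interior gradient estimate $|Df(z)|\lesssim\dist(z,\D D)^{\alpha-1}$ is correct and is a nice shortcut compared with the paper's change-of-variables computation in case b); and quoting the classical Warschawski--Lesley theorem for the source-side conformal map is legitimate, though the paper derives that statement as Corollary~\ref{rrje1} of the theorem being proved.)
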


\begin{remark}
In  Theorem~\ref{teoml} we consider the mappings between Jordan domains. The same conclusion can be made for multiply-connected domain bounded by finite number of $\mathscr{C}^1$ Jordan curves. We also expect that a similar conclusion can be made for non-bounded domains, but we did not pursue this question seriously.
\end{remark}

\subsection{The organization of the paper} We continue this section with some immediate corollaries of the main result. We prove that a $K-$quasiconformal mapping between $\mathscr{C}^1$ domains is $\beta-$H\"older continuous for every $\beta<1/K$. In particular we prove that a conformal mapping is $\beta-$H\"older continuous for every $\beta<1$. In the second section we prove a variation of the main result which will be needed to prove to prove Theorem~\ref{teoml} in the full generality. The proof of Theorem~\ref{teoml} is presented in the last section. The proof depends on a two-side connection between the $\alpha-$H\"older constant and the so-called $\alpha-$Bloch type norm of the holomorphic function defined on the unit disk expressed in Lemma~\ref{lemba}.  By using this connection, and  by a subtle application of $\mathscr{C}^1$ smoothness of the boundary curve of the image domain, we first find an a priori estimate of the $\alpha-$H\"older constant of a harmonic quasiconformal mapping of the unit disk onto a $\mathscr{C}^1$ Jordan domain having $\mathscr{C}^1$ extension up to the boundary. Then we use an approximation argument to get an estimate of $\alpha-$H\"older constant for a harmonic q.c. mapping which has not necessary smooth extension up to the boundary. To deal with the mappings whose domain is not the unit disk is a simple matter having proved the results from the second section.

\subsection{Some immediate consequences}
\begin{corollary}\label{rrje1}\cite{les}
If $f$ is a univalent conformal mapping between two Jordan domains $D$ and $\Omega$ with $\mathscr{C}^1$ boundaries, then $f$ is $\alpha$ H\"older continuous for every $0<\alpha<1$. Moreover,  if $\partial D$ and $\partial \Omega$ satisfy $B-$arc-chord condition for some $B\ge 1$, then  for every $\alpha\in(0,1)$ and every $a\in D$ and $b=f(a)\in \Omega$,  there exists $M=M(\alpha, a,b,B,\omega_{ D},\omega_{\Omega})$ so that $$\frac{1}{M}|z-w|^{1/\alpha}\le |f(z)-f(w)|\le M |z-w|^\alpha$$ for every $z,w\in D$.
\end{corollary}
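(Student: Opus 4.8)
The plan is to obtain Corollary~\ref{rrje1} directly from Theorem~\ref{teoml}, via the two observations that a conformal map is the degenerate case $k=0$ of a harmonic quasiconformal map, and that the theorem may be applied both to $f$ and to $f^{-1}$.

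First I would verify the hypotheses. A univalent conformal $f\colon D\to\Omega$ is holomorphic (hence harmonic, and sense-preserving since $J_f=|f'|^2>0$) and satisfies $f_{\bar z}\equiv 0$; thus it is a harmonic $k$-quasiconformal map with $k=0$, written $f=g+\bar h$ with $g=f$ and $h\equiv 0$. The remaining point is that the $\mathscr{C}^1$ Jordan curves $\partial D$ and $\partial\Omega$ automatically satisfy a $B$-arc-chord condition for some finite $B$: parametrising $\partial D$ by arc length $\gamma\colon\mathbb{R}/L\mathbb{Z}\to\partial D$ with $|\gamma'|\equiv 1$, if the ratio of arc length to chord length between pairs of points were unbounded along a sequence, then---since the chord length is bounded by the arc length---the chords would shrink to $0$, and by compactness and injectivity of $\gamma$ the two parameter values would collapse; but then $|\gamma(s)-\gamma(t)|=\bigl|\int_s^t\gamma'\bigr|\ge |s-t|\bigl(1-\omega_D(|s-t|)\bigr)$ along the short arc, contradicting the blow-up of the ratio. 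Hence $B=B(\partial D,\partial\Omega)<\infty$ is determined by the two curves, and it is legitimate to invoke Theorem~\ref{teoml} (the Bergman-space conclusion \eqref{hardy} plays no role here).

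Now fix $\alpha\in(0,1)$. Applying Theorem~\ref{teoml} to $f$ with $k=0$ gives a constant $M_\alpha=M_\alpha(a,b,0,B,\omega_D,\omega_\Omega)$ with $|f(z)-f(w)|\le M_\alpha|z-w|^\alpha$ for all $z,w\in D$; this is the first assertion and the upper bound. For the lower bound I would apply the theorem to $f^{-1}\colon\Omega\to D$, which is again conformal, hence harmonic and $0$-quasiconformal, with $f^{-1}(b)=a$; interchanging the roles of $D,\Omega$ (and of $a,b$ and $\omega_D,\omega_\Omega$), Theorem~\ref{teoml} supplies $\widetilde M_\alpha=\widetilde M_\alpha(b,a,0,B,\omega_\Omega,\omega_D)$ with $|f^{-1}(\zeta)-f^{-1}(\eta)|\le\widetilde M_\alpha|\zeta-\eta|^\alpha$ for $\zeta,\eta\in\Omega$. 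Putting $\zeta=f(z)$, $\eta=f(w)$ yields $|z-w|\le\widetilde M_\alpha|f(z)-f(w)|^\alpha$, i.e. $|f(z)-f(w)|\ge\widetilde M_\alpha^{-1/\alpha}|z-w|^{1/\alpha}$, and $M:=\max\{M_\alpha,\widetilde M_\alpha^{1/\alpha}\}$ gives both inequalities. I do not anticipate any genuine obstacle: the whole weight of the corollary rests on Theorem~\ref{teoml}, the only things requiring a word of justification being that conformality is exactly the case $k=0$ of the theorem and that $\mathscr{C}^1$ boundary curves come equipped with a finite arc-chord constant.
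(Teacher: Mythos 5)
Your proof is correct. Treating $f$ as a harmonic $0$-quasiconformal mapping and invoking Theorem~\ref{teoml} directly, with a second application to the conformal inverse $f^{-1}\colon\Omega\to D$ to turn the H\"older bound for $f^{-1}$ into the lower bound $|f(z)-f(w)|\ge M^{-1}|z-w|^{1/\alpha}$, is a legitimate use of the theorem, and your observation that a $\mathscr{C}^1$ Jordan curve automatically satisfies a $B$-arc-chord condition is accurate though strictly optional, since $B$ is in any case one of the parameters on which $M$ is allowed to depend. The paper's own proof differs in execution: instead of applying the theorem to $f$ and $f^{-1}$ on the given domains, it factors $f=b\circ a^{-1}$ through the unit disk, where $a\colon\mathbf{D}\to D$ and $b\colon\mathbf{D}\to\Omega$ are Riemann maps, applies Theorem~\ref{teoml} to each factor with exponent $\sqrt{\alpha}$, and composes to get exponent $\alpha$; it does not spell out the lower bound at all, which would in any case be obtained exactly as you do, by symmetry. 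Your direct argument is cleaner and yields both inequalities explicitly; the factorization through the disk buys nothing extra here, since the factor $a^{-1}$ still has the general Jordan domain $D$ as its source, so both proofs ultimately invoke Theorem~\ref{teoml} for conformal maps on a general $\mathscr{C}^1$ domain. One caution applying equally to both arguments: Corollary~\ref{rrje1} feeds into Theorem~\ref{lemba}, which is in turn used in the proof of Theorem~\ref{teoml}, so to keep the overall logic non-circular the conformal case should really be traced back to the Section~2 results (Theorem~\ref{mundet} and Corollary~\ref{rrje} with $h\equiv 0$) rather than to Theorem~\ref{teoml} in full generality; this is an issue of the paper's organization, not a defect of your deduction of the corollary from the theorem as stated.
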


\begin{proof}[Proof of Corollary~\ref{rrje1}]
Let $a$ be a univalent conformal mapping of the unit disk $\mathbf{D}$ onto $D$ and $b$ be a univalent conformal mapping of the unit disk onto $\Omega$. Then in view of Theorem~\ref{teoml}, $b$ and $a^{-1}$ are  $\sqrt{\alpha}-$H\"older continuous.  Then $f=b\circ a^{-1}$, is $\alpha-$H\"older continuous.
\end{proof}

Now we prove the following theorem which deals with H\"older continuity of quasiconformal mappings between smooth domains.

\begin{theorem}\label{lemba}
 Assume that $D$ and $\Omega$ are two Jordan domains  with $\mathscr{C}^1$ boundaries and assume that $a\in D$ and $b\in\Omega$.  Assume further that $\partial D$ and $\partial \Omega$ satisfy $B-$arc-chord condition for some $B\ge 1$. Let $K\ge 1$. Then for every $\beta<1/K$, there is a constant $M_\beta=M(\beta,a,b,\omega_D,\omega_\Omega,B, K)$ so that if $f:D\to \Omega$ is $K-$quasiconformal with $f(a) =b$ then  \begin{equation}|f(z) - f(w)|\le M_\beta|z-w|^\beta, \ \ \ z,w\in D.\end{equation}
\end{theorem}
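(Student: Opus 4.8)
The plan is to factor the $K$-quasiconformal map $f:D\to\Omega$ through a harmonic quasiconformal map and then invoke Theorem~\ref{teoml}, exactly in the spirit of the proof of Corollary~\ref{rrje1}. First I would pass to the unit disk: let $\varphi:\DD\to D$ and $\psi:\DD\to\Omega$ be conformal (Riemann) maps normalized by $\varphi(0)=a$, $\psi(0)=b$. By Theorem~\ref{teoml} applied with arbitrary H\"older exponent close to $1$ (and to the conformal, hence $0$-quasiconformal, case), both $\psi$ and $\varphi^{-1}$ are $\gamma$-H\"older continuous on their closures for every $\gamma<1$, with constants controlled by the stated data $a,b,B,\omega_D,\omega_\Omega$; moreover the Kellogg–Warschawski-type lower bound from Corollary~\ref{rrje1} gives that $\varphi^{-1}$ is, conversely, at worst $\gamma^{-1}$-"anti-H\"older," i.e.\ $|\varphi^{-1}(z)-\varphi^{-1}(w)|\ge c|z-w|^{1/\gamma}$ — this will be needed because precomposing with $\varphi^{-1}$ can only slightly worsen the exponent.

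Next, the middle map $F:=\psi^{-1}\circ f\circ\varphi:\DD\to\DD$ is $K$-quasiconformal and fixes the origin. By Mori's theorem (quoted in the introduction), $F$ is $\delta$-H\"older continuous on $\overline{\DD}$ with $\delta=1/K$ and a universal constant. Composing, $f=\psi\circ F\circ\varphi^{-1}$. If $\varphi^{-1}$ is $\gamma_1$-H\"older, $F$ is $\tfrac1K$-H\"older, and $\psi$ is $\gamma_2$-H\"older, then $f$ is $\gamma_1\cdot\tfrac1K\cdot\gamma_2$-H\"older. Since $\gamma_1,\gamma_2$ may be taken arbitrarily close to $1$, for any prescribed $\beta<1/K$ we can choose $\gamma_1,\gamma_2<1$ with $\gamma_1\gamma_2/K>\beta$, and the resulting H\"older constant $M_\beta$ depends only on $\beta$, $K$, and the data entering the H\"older constants of $\psi$ and $\varphi^{-1}$, namely $a,b,B,\omega_D,\omega_\Omega$. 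This is precisely the asserted dependence.

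The only subtlety — and the step I would write out carefully — is that to apply Theorem~\ref{teoml} to $\varphi$ and $\psi$ one needs the boundary curves $\partial D$, $\partial\Omega$ to satisfy the $B$-arc-chord condition in addition to being $\mathscr C^1$; here I would note that a $\mathscr C^1$ Jordan curve automatically satisfies an arc-chord condition with a constant depending only on its modulus of continuity $\omega$, so this hypothesis is not an extra assumption but is absorbed into the stated parameters. A second minor point is that Theorem~\ref{teoml}, as stated, concerns harmonic quasiconformal maps; for the conformal maps $\varphi,\psi$ one invokes it in the degenerate case $k=0$ (a conformal map is harmonic and $0$-quasiconformal), which is exactly how Corollary~\ref{rrje1} uses it. With these observations in place the composition estimate is routine, and the theorem follows. $\blacksquare$
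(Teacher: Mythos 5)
Your proposal is correct and follows essentially the same route as the paper: conjugate $f$ by Riemann maps to get a $K$-quasiconformal self-map of the disk fixing the origin, apply Mori's theorem, obtain $\gamma$-H\"older continuity of the conformal factors for every $\gamma<1$ from Corollary~\ref{rrje1} (i.e.\ Theorem~\ref{teoml} with $k=0$), and multiply the exponents. The ``anti-H\"older'' lower bound you mention is not actually needed for the composition, and is omitted in the paper's argument.
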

In connection to Theorem~\ref{lemba}, we want to mention that some more general results are known under some more general conditions on the domains but they do not cover this result.
For example
O. Martio and R. N\"akki in \cite{mar1} showed that if $f$
induces a boundary mapping which belongs to ${\rm
Lip}_\alpha(\partial D)$, then $f$ is in ${\rm Lip}_\beta(D)$, where
$$\beta=\min\{\alpha,1/K\};$$ the exponent $\beta$ is
sharp. We also want to refer to the papers \cite{kot} and \cite{naki} which also consider the global H\"older continuity of quasiconformal mappings. Concerning the integrability of the derivative of a quasiconfromal mapping and its connection to the global H\"older continuity  we refer to the paper by Astala and  Koskela  \cite{astkos}.
\begin{proof}[Proof or Theorem~\ref{lemba}]
Let $\phi:\mathbf{D}\to D$  and $\psi:\Omega \to \mathbf{D}$ be conformal diffeomorphisms so that $\phi(0)=a$ and $\psi(b)=0$. Then $f_0=\psi \circ f\circ \phi$ is a $K-$quasiconfonformal mapping of the unit disk onto itself so that $f_0(0)=0$. Thus by Mori's theorem $$|f_0(z)-f_0(w)|\le 16 |z-w|^{1/K}.$$ Now, if $\beta<1/K$, then there are two constants $\alpha_1<1$ and $\alpha_2<1$ so that $\alpha_1\cdot \alpha_2/K=\beta$. Since $f=\psi^{-1}\circ f_0 \circ \phi^{-1}$, by making use of Corollary~\ref{rrje1}, we get  and $\psi^{-1}$ is $\alpha_1$-H\"older continuous and $\phi^{-1}$ is $\alpha_2$-H\"older continuous. By having in mind the fact that  $f_0$ is $1/K$-H\"older continuous, it follows that $f$ is $\beta-$H\"older continuous as claimed.
\end{proof}
\begin{remark} Similar result can be shown for multiply connected domains in the complex plane having a $\mathscr{C}^1$ boundary.
If $f$ a conformal mapping of the unit disk onto a Jordan domain with merely $\mathscr{C}^1$ boundary, then $f$ is not necessarily Lipschitz continuous. See an example given by Lesley and Warschawski in \cite{les} as well as the example $f_0(z) = 2z+ (1-z) \log (1-z)$ given in the Pommerenke book \cite{boundary}, which is a conformal diffeomorphism of the unit disk onto a Jordan domain with merely $\mathscr{C}^1$ boundary. Then $|f_0'(z)|$ is not bounded and thus $f_0$ is not Lipschitz continuous.   The content of  Corollary~\ref{rrje1} is not new (see for example \cite{Les1}). See also  Warschawski \cite[Corollary,~p.~255]{wars1} for a related result. We should also mention  the paper by Brennan, \cite{brennan}  where the famous Brannen conjecture comes from. Theorem~3 of that paper contains a short proof of special case of \eqref{hardy} for $\Omega=\mathbf{D}$ and $f$ being conformal.
\end{remark}

\section{Auxiliary results}
The starting point of this section is the theorem of Warschawski for conformal mappings which states the following. Assume that $f$ is a conformal mapping of the unit disk onto a Jordan domain $\Omega$ with a $\mathscr{C}^1$ boundary $\gamma$. Assume that $g$ is the arc-length parametrisation of $\gamma$, and assume that $\omega=\omega_{g'}$ is modulus of continuity of $g'$. Assume also that $\gamma$ satisfies  $B-$chord-arc condition for some constant $B>1$. Then for every $p\in\mathbf{R}$, there is a constant $A_p$, depending only on $\Omega$, $\omega$, $B$, $p$ and $f(0)$ so that
\begin{equation}\label{hardye}\int_{\mathbf{T}}|f'(z)|^{p}|\mathrm{d}z|\le E^p_{ p}.\end{equation}

We first give an extension of \eqref{hardy}, and prove a variation of the main result needed in the sequel.
\begin{theorem}\label{mundet}
If $f=g+\overline{h}$ is a $K$- q.c. harmonic mapping of the unit disk $\mathbf{D}$ onto a domain $\Omega$ with $\mathscr{C}^1$ boundary, so that $h$ has holomorphic extension beyond the boundary of the unit disk, then $g', 1/g' \in H^p(\mathbf{D})$ for every $p>0$. Moreover

\begin{equation}\label{hardyeF}\int_{\mathbf{T}}|g'(z)|^{p}|\mathrm{d}z|\le F^p_{ p},\end{equation} where $F_p$ is a constant that depends on the same parameters as $E_p$ in \eqref{hardye} as well as on $k$.
\end{theorem}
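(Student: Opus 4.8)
The plan is to reduce the assertion to Warschawski's estimate \eqref{hardye} by ``holomorphically reflecting'' the co-analytic part and then comparing the harmonic parametrisation of $\partial\Omega$ with the conformal one. Since $J_f=|g'|^2-|h'|^2>0$ and $|h'|\le k|g'|$ on $\mathbf{D}$, the function $g'=f_z$ is zero-free in $\mathbf{D}$, so $1/g'$ is holomorphic; and because $h$ extends holomorphically past $\mathbf{T}$ we may set $h^{*}(z):=\overline{h(1/\bar z)}$, holomorphic in an annulus $A=\{r_{0}<|z|<1\}$ with $h^{*}=\bar h$ on $\mathbf{T}$. Then $G:=g+h^{*}$ is holomorphic in $A$, extends continuously to $\overline{A}$ (as $f$ is quasiconformal between Jordan domains, hence extends to a homeomorphism $\overline{\mathbf{D}}\to\overline{\Omega}$), and $G|_{\mathbf{T}}=f|_{\mathbf{T}}$ is a homeomorphism onto the $\mathscr{C}^{1}$ Jordan curve $\gamma=\partial\Omega$. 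Differentiating along $\mathbf{T}$ gives $\tfrac{d}{dt}f(e^{it})=ie^{it}G'(e^{it})$ a.e., so
\begin{equation*}
\rho(t):=\bigl|\tfrac{d}{dt}f(e^{it})\bigr|=|G'(e^{it})|,\qquad \int_{0}^{2\pi}\rho(t)\,dt=|\gamma|,
\end{equation*}
and, using $|(h^{*})'(e^{it})|=|h'(e^{it})|\le k|g'(e^{it})|$, we get $(1-k)|g'|\le|G'|\le(1+k)|g'|$ on $\mathbf{T}$, i.e.
\begin{equation*}
\tfrac{1}{1+k}\,\rho(t)\le |g'(e^{it})|\le \tfrac{1}{1-k}\,\rho(t)\quad\text{a.e.}
\end{equation*}
(These comparisons can also be seen directly: the analytic and anti-analytic projections of $\dot F=\tfrac{d}{dt}(f|_{\mathbf{T}})$ are, up to unimodular factors, $g'$ and $\overline{h'}$ on $\mathbf{T}$, and $|h'|\le k|g'|$ forces $|g'|\asymp_{k}\rho$.) Thus the theorem is reduced to the single estimate $\int_{\mathbf{T}}\rho^{\pm p}|dz|\le C_{\pm p}$ for all $p>0$, with $C_{\pm p}$ depending only on the admissible data of \eqref{hardye} and on $k$; the upgrade from these boundary bounds to $g',1/g'\in H^{p}$ uses that $g$ is, up to an additive constant, the Cauchy integral of the continuous function $f|_{\mathbf{T}}$, so $g\in N^{+}$, whence so are $g'$ and $1/g'$.

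Next one compares with a conformal map $\varphi\colon\mathbf{D}\to\Omega$, $\varphi(0)=f(0)$, for which \eqref{hardye} gives $\int_{\mathbf{T}}|\varphi'|^{q}|dz|\le E_{q}^{q}$ for every $q\in\mathbf{R}$. Writing $u=\varphi^{-1}\circ f$, a $k$-quasiconformal self-map of $\mathbf{D}$ fixing $0$, we have $g'=(\varphi'\circ u)u_{z}$; on $\mathbf{T}$, $u(e^{it})=e^{i\theta(t)}$ with $\theta$ an absolutely continuous increasing boundary correspondence, $\int_{0}^{2\pi}\theta'\,dt=2\pi$, and $(1-k)|u_{z}|\le\theta'\le(1+k)|u_{z}|$ a.e., so that $\rho(t)\asymp_{k}|\varphi'(e^{i\theta(t)})|\,\theta'(t)$. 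A change of variables $\theta=\theta(t)$ then yields, for every $q$,
\begin{equation*}
\int_{0}^{2\pi}\rho(t)^{q}\,dt\ \asymp_{k}\ \int_{0}^{2\pi}|\varphi'(e^{i\theta})|^{q}\,\theta'(t(\theta))^{\,q-1}\,d\theta .
\end{equation*}
For $q=1$ this is $\int_{\mathbf{T}}|\varphi'|=|\gamma|$, and for $0<q\le1$ Hölder's inequality (exponents $1/q$, $1/(1-q)$) gives $\int_0^{2\pi}\rho^{q}\,dt\le|\gamma|^{q}(2\pi)^{1-q}$; together with $\rho\asymp_k|g'|$ this already settles $\int_{\mathbf{T}}|g'|^{p}$ for $0<p\le1$ with the asserted dependence.

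What remains is to bound, for every $r\in\mathbf{R}$, the $L^{r}(\mathbf{T})$-norm of the boundary-correspondence derivative $\theta'$ by a constant depending only on the admissible data and $k$; plugging this into the displayed identity (with one extra Hölder splitting when $q>1$) and invoking the Warschawski bounds for $\varphi'$ and $1/\varphi'$ closes the argument. The range $r<0$ follows from a lower estimate $\theta'\ge c_{0}>0$, equivalently a Heinz--Bernstein type bound $|Df|=|g'|+|h'|\ge c_{0}(\Omega,\omega,B,k,f(0))>0$ on $\mathbf{D}$ for harmonic quasiconformal mappings onto $\mathscr{C}^{1}$ domains; this also gives $1/g'\in H^{\infty}\subset H^{p}$. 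For $r>0$ one argues by bootstrap: from $\rho\in L^{1}$ the arc-length correspondence $s(t)=\int_{0}^{t}\rho$ is absolutely continuous, so the tangent-angle function $\tau(t)=\beta(s(t))$ of $\gamma$ along $f|_{\mathbf{T}}$ (with $\beta$ the — merely continuous — tangent angle of the $\mathscr{C}^{1}$ curve) is continuous in $t$; since $\log(ig')$ is holomorphic in $\mathbf{D}$ with $\arg(ig'(e^{it}))=\tau(t)+\eta(t)-t$, where $\|\eta\|_{\infty}\le\arctan\frac{k}{1-k}<\pi/2$ because the q.c. condition bounds the angle between $\dot F$ and its analytic projection, a conjugate-function estimate of Zygmund type gives $\rho\asymp_{k}|g'|\in L^{p}$ for $p$ in a $k$-dependent range, with a bound controlled by $k$ and the modulus of continuity of $\tau$; improved integrability of $\rho$ improves the modulus of continuity of $s$, hence of $\tau$, and iterating carries the estimate through every $p$, the Warschawski data entering only through $\omega$, $B$ and $|\gamma|$.

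The main obstacle is precisely this last step: converting the qualitative fact that $f|_{\mathbf{T}}$ parametrises the $\mathscr{C}^{1}$ curve $\gamma$ into a quantitative, $f$-independent control of every $L^{r}$-norm of the boundary derivative. This is exactly where mere $\mathscr{C}^{1}$ (rather than $\mathscr{C}^{1,\alpha}$ or Dini) smoothness is decisive — one obtains finiteness of every $L^{r}$-norm but no $L^{\infty}$ bound, consistent with the non-Lipschitz behaviour recorded in Theorem~\ref{teoml}.
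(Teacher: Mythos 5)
Your opening reduction is sound and close in spirit to the paper's: $g'$ is zero-free, the reflection $h^{*}(z)=\overline{h(1/\bar z)}$ (the paper's $h_1$) produces a holomorphic function on an annulus whose boundary modulus is comparable to $|g'|$, and the problem becomes an integrability statement for the boundary derivative $\rho\asymp_k|g'|$. But from there the proof has a genuine gap, which you yourself flag: the entire content of the theorem is the quantitative bound on \emph{every} $L^{r}$-norm ($r\in\mathbf{R}$) of the boundary derivative, and your argument defers exactly that step. The two devices you offer to close it do not work. First, the ``Heinz--Bernstein type bound'' $|Df|\ge c_{0}>0$ for harmonic q.c.\ maps onto merely $\mathscr{C}^{1}$ domains is false already in the conformal case $h\equiv 0$: for a $\mathscr{C}^{1}$ target one only knows that $\arg\varphi'$ extends continuously (Lindel\"of), and $|\varphi'|$ need be bounded neither above nor below away from $0$; so the claim $1/g'\in H^{\infty}$ is wrong, and your treatment of the negative exponents (hence of $1/g'\in H^{p}$ and of the range $r<0$ for $\theta'$) collapses. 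The correct statement is precisely $1/g'\in H^{p}$ for all finite $p$, which cannot be reached via an $H^\infty$ bound. Second, for positive exponents your decomposition $\arg(ig'(e^{it}))=\tau(t)+\eta(t)-t$ with $\tau$ continuous and $\|\eta\|_{\infty}<\pi/2$ only yields, by the Zygmund conjugate-function theorem, integrability of $|g'|^{\pm\lambda}$ in a \emph{restricted, $k$-dependent} range $\lambda<\pi/(2\|\eta\|_\infty)$; the proposed ``bootstrap'' (better integrability of $\rho$ improves the modulus of continuity of $s$, hence of $\tau$, iterate) is not an argument --- continuity of $\tau$ is already known and does not remove the bounded-but-discontinuous summand $\eta$, which is what caps the exponent. (A smaller unjustified point: for the q.c.\ self-map $u=\varphi^{-1}\circ f$ you compare $\theta'$ with boundary values of $u_z$, which need not exist a.e.; this can be bypassed by arguing with arclength, but it signals that the conformal-comparison route is not carrying the load.)

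The paper avoids the capped exponent precisely by using the hypothesis that $h$ extends holomorphically past $\mathbf{T}$: it forms $H(z)=i\bigl(zg'(z)-h_1(z)/z\bigr)$, holomorphic in an annulus, whose boundary values coincide with $\partial_t f$, so that by Proposition~\ref{lindel} the \emph{full} argument of $H$ (not just a continuous part of it) extends continuously to $\mathbf{T}$; the $\eta$-term is absorbed exactly rather than estimated. It then cuts the annulus into two simply connected pieces $S_{1},S_{2}$, transplants $H$ to the disk by conformal maps $\Phi_{j}$, and runs the Smirnov--Warschawski argument (approximate $\arg K_j$ by a trigonometric polynomial, exponentiate $p(L_j-\Psi)$, use the mean value property and $\cos p\epsilon$) to get $K_{j}\in H^{p}$ for every $p$ --- and, since the same works with $p$ of either sign, both \eqref{hardyeF} and $1/g'\in H^{p}$ follow with constants depending only on $\omega$, $B$, $f(0)$, $p$ and $k$. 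You already have the reflected function $G=g+h^{*}$ in hand; the missing idea is to work with its (continuous-argument) logarithm on the annulus in this way, rather than with $\log(ig')$ on the disk plus a bounded error and a conformal change of variables.
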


Now recall the Morrey inequality.

\begin{proposition}[Morrey's inequality]
Assume that $2<p\le \infty$ and assume that $U$ is a bounded domain in $\mathbf{R}^2$ with $\mathscr{C}^1$ boundary. Then there exists a constant $C$ depending only on  $p$ and $U$ so that \begin{equation}\label{morrey}
\|u\|_{\mathscr{C}^{0,\alpha}(U)}\le C\|u\|_{W^{1,p}(U)}
\end{equation}
for every $u\in \mathscr{C}^1(U)\cap L^p(U)$, where
$$\|u\|_{\mathscr{C}^{0,\alpha}(U)}=\sup_{z\neq w} \frac{|u(z)-u(w)|}{|z-w|^\alpha},$$ and
$$\alpha=1-\frac{2}{p},$$ and $$\|u\|_{W^{1,p}(U)}=\|u\|_{L^{p}(U)}+\|Du\|_{L^p(U)}.$$

Here $W^{1,p}(U)$ is the Sobolev space.

\end{proposition}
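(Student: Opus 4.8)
The plan is to deduce the estimate from the classical chain: a pointwise Riesz-potential bound for the oscillation of a smooth function, H\"older's inequality, and a Sobolev extension across the $\mathscr{C}^1$ boundary. First I would record the elementary fact that if $v\in\mathscr{C}^1$ on a ball $B=B(x_0,r)\subset\mathbf{R}^2$, then for every $x\in B$
$$\left|v(x)-\frac{1}{|B|}\int_B v(y)\,dy\right|\le C\int_B\frac{|Dv(y)|}{|x-y|}\,dy,$$
which follows by writing $v(x)-v(y)=-\int_0^{|x-y|}\partial_\rho v(x+\rho\omega)\,d\rho$ with $\omega=(y-x)/|y-x|$, averaging over $y\in B$, and passing to polar coordinates around $x$. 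Since $p>2$, the conjugate exponent $q=p/(p-1)$ satisfies $q<2$, so $y\mapsto|x-y|^{-1}$ is locally in $L^q$, and H\"older's inequality gives $\int_B|x-y|^{-1}|Dv(y)|\,dy\le C\,r^{1-2/p}\|Dv\|_{L^p(B)}$. Applying this with two points $x,z$ and the ball $B$ of radius $r=|x-z|$ centred at their midpoint yields the interior oscillation estimate $|v(x)-v(z)|\le C|x-z|^{1-2/p}\|Dv\|_{L^p(\mathbf{R}^2)}$; bounding $|v(x)|$ by its average over a unit ball plus the same potential term, and using H\"older once more to control the $L^1$ average by the $L^p$ norm, gives $\|v\|_{\mathscr{C}^{0,\alpha}(\mathbf{R}^2)}\le C\|v\|_{W^{1,p}(\mathbf{R}^2)}$ with $\alpha=1-2/p$ (the endpoint $p=\infty$, $\alpha=1$, being the same argument with $q=1$, since $|x-y|^{-1}$ is then integrable on $B$).

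To pass from $\mathbf{R}^2$ to the bounded $\mathscr{C}^1$ domain $U$, I would invoke the Sobolev extension theorem for $\mathscr{C}^1$ domains: there is a bounded linear operator $E\colon W^{1,p}(U)\to W^{1,p}(\mathbf{R}^2)$ with $Eu=u$ on $U$, $Eu$ compactly supported, and $\|Eu\|_{W^{1,p}(\mathbf{R}^2)}\le C(U,p)\|u\|_{W^{1,p}(U)}$, constructed in the usual way by flattening the boundary with a $\mathscr{C}^1$ chart, reflecting, and patching with a partition of unity. Because $Eu$ need not be $\mathscr{C}^1$, I would first apply the interior estimate to the mollifications $(Eu)\ast\varphi_\varepsilon$, which converge to $Eu$ in $W^{1,p}$ and locally uniformly, and then let $\varepsilon\to0$; the H\"older seminorm bound is stable under this limit. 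Composing the inequalities, $\|u\|_{\mathscr{C}^{0,\alpha}(U)}\le\|Eu\|_{\mathscr{C}^{0,\alpha}(\mathbf{R}^2)}\le C\|Eu\|_{W^{1,p}(\mathbf{R}^2)}\le C\|u\|_{W^{1,p}(U)}$, which is \eqref{morrey}.

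The only genuinely nontrivial ingredient is the extension operator, i.e. the point at which the $\mathscr{C}^1$ regularity of $\partial U$ is used; everything else is the one-variable fundamental theorem of calculus together with H\"older's inequality. For the application in this paper $U$ will in fact be the unit disk, where an extension operator is completely explicit (reflection $z\mapsto 1/\bar z$ composed with a smooth cutoff), so this step can be bypassed. Since the statement is classical, I would either present the argument in this generality or simply refer to a standard text; the proof is included only for completeness.
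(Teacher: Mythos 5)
Your proposal is correct: the pointwise oscillation bound via the Riesz potential, the H\"older estimate using $q=p/(p-1)<2$ (so the exponent $r^{1-2/p}$ comes out right), the extension operator for $\mathscr{C}^1$ domains, and the mollification/limiting step are exactly the standard chain of arguments, and no step fails. There is, however, nothing in the paper to compare it against: the paper states Morrey's inequality as a named classical proposition and gives no proof at all, using it only as a black box to conclude that $f^{-1}$ is $\alpha$-H\"older continuous from the finiteness of $\int_\Omega |Df^{-1}|^p\,\mathrm{d}\lambda$. So your write-up supplies a proof where the paper relies on a citation-level fact; the one substantive observation you add, correctly, is that the $\mathscr{C}^1$ regularity of $\partial U$ enters only through the Sobolev extension operator. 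One small inaccuracy in your closing aside: in the paper's application the domain $U$ is $\Omega$ (the $\mathscr{C}^1$ Jordan image domain), not the unit disk, since the inequality is applied to $u=f^{-1}\colon\Omega\to\mathbf{D}$; hence the explicit disk reflection does not let you bypass the extension step there, although this does not affect the validity of your proof of the proposition as stated.
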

\begin{corollary}\label{rrje}
Under the conditions of the previous theorem, for every $\alpha<1$, $f$ and $f^{-1}$ are $\alpha-$H\"older continuous. The result is optimal since, $f$ is not necessarily Lipschitz in general.
\end{corollary}

\begin{remark}
If $h\equiv 0$, then Theorem~\ref{mundet} reduces to the classical result of Warschawski (\cite{wars}), see also a similar result by Smirnov \cite{smirnov} and Goluzin \cite[Theorem~7, p.~415]{G}. We include the proof of Theorem~\ref{mundet} for the completeness of the argument.
\end{remark}

\begin{proof}[Proof of corollary~\ref{rrje}]
Let $\alpha<1$ and prove that $f$ is $\alpha-$H\"older continuous. We have $$|f(e^{it})-f(e^{is})|=\int_{s}^t |\partial_\tau f(e^{i\tau})|\mathrm{d}\tau\le
\left(\int_{s}^t |\partial_\tau f(e^{i\tau})|^p\mathrm{d}\tau\right)^{1/p}\left(\int_{s}^t \mathrm{d}\tau\right)^{1/q}.$$ Therefore for $\alpha=1-1/p=1/q$ we get

$$|f(e^{it})-f(e^{is})|\le \|\partial_\tau f\|_p |s-t|^\alpha.$$

As $h$ is smooth in $\overline{\mathbf{D}}$, it follows that $g$ is $\alpha-$H\"older continuous in $\mathbf{T}$. By using the well-known Hardy-Littlewood   theorem \cite[Theorem~4,~p.413]{G}, we get that $g$ is $\alpha-$H\"older continuous  on $\mathbf{D}$. Thus $f$ is $\alpha-$H\"older continuous  on $\mathbf{D}$.

To prove that $f^{-1}$ is $\alpha-$H\"older continuous, observe that for $w=f(z)$,  $$\partial_w f^{-1}(w)=\frac{\overline{f_z}}{J_f}= \frac{\overline{g'(z)}}{|g'(z)|^2-|h'(z)|^2}.$$ Thus \[\begin{split}\int_{\Omega}|\partial_w f^{-1}(w)|^p \mathrm{d}\lambda(w)&=\int_{\mathbf{D}}\left( \frac{|g'(z)|}{|g'(z)|^2-|h'(z)|^2}\right)^p J_f \mathrm{d}\lambda(z)\\&\le \int_{\mathbf{D}} \frac{|g'(z)|^{p+2}}{|g'(z)|^{2p}}\frac{1+k^2}{(1-k^2)^p}\mathrm{d}\lambda(z)\\&=
\frac{1+k^2}{(1-k^2)^p}\int_{\mathbf{D}}|g'(z)|^{2-p} \mathrm{d}\lambda(z), \ \ k=(K-1)/(K+1).\end{split}\]
Here $\lambda$ is the Lebesgue measure in the plane.
Therefore by using the isoperimetric inequality for holomorphic functions we get
\[\begin{split}\int_{\Omega}|D f^{-1}(w)|^p \mathrm{d}\lambda(w)&\le  \frac{(1+k^2)(1+k^p)}{(1-k^2)^p}\int_{\mathbf{D}}|g'(z)|^{2-p} \mathrm{d}\lambda(z)
\\&\le \frac{(1+k^2)(1+k^p)}{4\pi (1-k^2)^p}\left(\int_{\mathbf{T}}|g'(z)|^{1-p/2} |\mathrm{d}z|\right)^2<\infty.\end{split}\]

From \eqref{morrey} we infer that $u=f^{-1}$ is $\alpha-$H\"older continuous and the corollary is proved.

\end{proof}

\begin{proof}[Proof of Theorem~\ref{mundet}]

We use the following proposition
\begin{proposition}\label{lindel}\cite{aim2}
If $f(z)=\mathcal{P}[f^*](z)$ is a quasiconformal harmonic mapping of the unit disk onto a Jordan domain bounded by a curve  $\gamma$, then the function $$U(z):=\arg\left(\frac{1}{z}\frac{\partial}{\partial \varphi}f(z)\right)$$ is a well defined and smooth in $\mathbf{D}^*:=\mathbf{D}\setminus\{0\}$ and has a continuous extension to $\mathbf{T}$ if and only if $\gamma\in \mathscr{C}^1$. Furthermore, there holds $$U(e^{i\varphi})=\beta(\varphi)-\varphi,$$  where  $\beta(\varphi)$ is the tangent angle of $\gamma$ at $f^*(e^{i\varphi})$.
\end{proposition}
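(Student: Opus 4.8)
This is a Lindel\"of-type theorem. When $h\equiv0$ the mapping $f$ is conformal, $\tfrac{1}{z}\partial_\varphi f=if'$, so $U=\tfrac{\pi}{2}+\arg f'$, and the assertion is exactly Lindel\"of's classical theorem: for a conformal map of $\mathbf{D}$ onto a Jordan domain, $\arg f'$ extends continuously to $\mathbf{T}$ if and only if the boundary is $\mathscr{C}^1$, in which case $\arg f'(e^{i\varphi})=\beta(\varphi)-\varphi-\tfrac{\pi}{2}$. My plan is to settle the interior assertions by hand, to isolate the effect of quasiconformality via the decomposition $f=g+\bar h$, and then to transplant Lindel\"of's boundary argument to the analytic part $g$.

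For the interior statements: $f$ is harmonic, hence $\mathscr{C}^\infty$ on $\mathbf{D}$, and sense-preserving, so $J_f=|f_z|^2-|f_{\bar z}|^2>0$. For $z=re^{i\varphi}\neq0$ one has $\partial_\varphi f(z)=iz f_z(z)-i\bar z f_{\bar z}(z)$, of modulus at least $|z|\bigl(|f_z(z)|-|f_{\bar z}(z)|\bigr)>0$, so $\tfrac{1}{z}\partial_\varphi f$ is smooth and zero-free on $\mathbf{D}\setminus\{0\}$. It is single-valued because, for each fixed $r<1$, $\varphi\mapsto f(re^{i\varphi})$ is a positively oriented $\mathscr{C}^\infty$ Jordan curve, so by the Umlaufsatz its velocity winds exactly once about the origin, which cancels the winding of $\varphi\mapsto re^{i\varphi}$; hence $U=\arg\partial_\varphi f-\varphi$ is $2\pi$-periodic. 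Writing $\nu:=f_{\bar z}/f_z=\overline{h'}/g'$, so that $|\nu|\le k<1$ (in particular $g'$ is zero-free and $\arg g'$ is a well-defined harmonic function on $\mathbf{D}$), one computes
\[
\tfrac{1}{z}\,\partial_\varphi f(re^{i\varphi})=i\,g'(z)\bigl(1-e^{-2i\varphi}\nu(z)\bigr),\qquad
U(z)=\tfrac{\pi}{2}+\arg g'(z)+\arg\bigl(1-e^{-2i\varphi}\nu(z)\bigr),
\]
and the last summand belongs to $(-\arcsin k,\arcsin k)$ because $1-e^{-2i\varphi}\nu$ stays in the disc $\{\,|w-1|\le k\,\}$. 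Thus, near $\mathbf{T}$, the behaviour of $U$ is that of the harmonic function $\arg g'$ up to a bounded perturbation controlled only by the dilatation.

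For the equivalence: if $U$ extends continuously to $\mathbf{T}$, then $\arg\partial_\varphi f(re^{i\varphi})=U(re^{i\varphi})+\varphi$ converges uniformly in $\varphi$ as $r\to1$, i.e. the tangent directions of the Jordan curves $\gamma_r:=f(\{|z|=r\})$ converge uniformly; since also $f(re^{i\varphi})\to f^{*}(e^{i\varphi})$ uniformly (the quasiconformal $f$ extends to a homeomorphism of $\overline{\mathbf{D}}$ onto $\overline{\Omega}$), the limiting curve $\gamma$, parametrised by $f^{*}$, has a continuously varying tangent, i.e. $\gamma\in\mathscr{C}^1$, and passing to the limit yields $U(e^{i\varphi})=\beta(\varphi)-\varphi$. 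Conversely, suppose $\gamma\in\mathscr{C}^1$; here I would run Lindel\"of's argument on $g$. Fixing a boundary point and using the coordinate frame given by the tangent and inner normal of $\gamma$ there, $\gamma$ is near that point the graph of a function with continuous slope vanishing at the base point; combining the Poisson (respectively Schwarz) representations of the harmonic (respectively analytic) data of $f$ with the distortion bound $|f_{\bar z}|\le k|f_z|$, one shows that $\arg g'(re^{i\varphi})$, and hence $\arg\partial_\varphi f(re^{i\varphi})$, converge uniformly as $r\to1$ to the tangent angle of $\gamma$ at $f^{*}(e^{i\varphi})$. With the boundedness of the correction term $\arg(1-e^{-2i\varphi}\nu)$ and the routine fact that it too stabilises once $\arg g'$ does, this gives the continuous extension of $U$ and the formula $U(e^{i\varphi})=\beta(\varphi)-\varphi$.

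I expect the real obstacle to lie in this converse direction. For a boundary that is $\mathscr{C}^1$ but not Dini-smooth, $|g'|$ may be unbounded near $\mathbf{T}$, so one cannot argue via continuity of $g'$ itself and must control $\arg g'$ directly --- precisely the delicate estimate already at the heart of Lindel\"of's and Warschawski's proofs in the conformal case. By contrast the extra, co-analytic term $\bar h$ is benign: quasiconformality confines the perturbation $\arg(1-e^{-2i\varphi}\nu)$ to $(-\arcsin k,\arcsin k)$, so it cannot spoil the qualitative conclusion and only enters the bookkeeping.
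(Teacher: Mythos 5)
This proposition is not proved in the paper at all: it is quoted from \cite{aim2}, so the only thing to compare against is that source, and your text must stand as a proof on its own. The interior part of your argument (smoothness, non-vanishing of $z^{-1}\partial_\varphi f$, single-valuedness of the argument, and the decomposition $U=\tfrac{\pi}{2}+\arg g'+\arg\bigl(1-e^{-2i\varphi}\nu\bigr)$ with the last term confined to $(-\arcsin k,\arcsin k)$) is fine, and the implication ``$U$ extends continuously $\Rightarrow\gamma\in\mathscr C^1$'' is plausibly sketched. The genuine gap is the converse, which is the direction the paper actually uses: you only announce that you ``would run Lindel\"of's argument on $g$'', and the plan as stated is incorrect. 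It is not true that $\arg g'(re^{i\varphi})$ converges to the tangent angle: for $f(z)=z+c\bar z$ with $0<|c|<1$, a harmonic quasiconformal map onto an ellipse (a $\mathscr C^\infty$ curve), one has $\arg g'\equiv 0$ while $\beta(\varphi)-\varphi=\tfrac{\pi}{2}+\arg\bigl(1-ce^{-2i\varphi}\bigr)$ is nonconstant; the tangent direction is carried by the full product $ig'\bigl(1-e^{-2i\varphi}\nu\bigr)$, not by $g'$ alone. Likewise the ``routine fact'' that the correction term stabilises once $\arg g'$ does is neither routine nor true in general: $\omega=h'/g'$ is merely a bounded analytic function, and boundary continuity of $\arg g'$ gives no control whatever of $\arg\bigl(1-e^{-2i\varphi}\bar\omega\bigr)$ (take $g'\equiv 1$ and $h'(z)=k\exp\bigl(\frac{z+1}{z-1}\bigr)$, whose boundary behaviour at $z=1$ is wild). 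The continuity of that term at $\mathbf T$ is forced by the $\mathscr C^1$ geometry of $\gamma$ acting on the \emph{combination}, and extracting it is exactly as deep as the theorem itself.

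So the substantive content --- that $\mathscr C^1$ regularity of $\gamma$ yields a continuous boundary extension of $\arg\bigl(z^{-1}\partial_\varphi f\bigr)$ with no a priori boundary regularity of either $g'$ or the dilatation, and with $|g'|$ possibly unbounded --- is left unproved in your proposal. The argument of \cite{aim2} does not transplant Lindel\"of's conformal proof to the analytic part; it works with $\partial_\varphi f$ as a whole (compare the analytic completion $H$ of $\partial_t f$ that the present paper manipulates in the proof of Theorem~\ref{mundet}) and requires genuinely new estimates. As it stands you have the interior statements and the easy implication, but not the proposition; either carry out the hard direction for the full function $z^{-1}\partial_\varphi f$ or cite \cite{aim2} for it, as the paper does.
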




By the assumption we have that $h(z) =\sum_{j=0}^\infty b_j z^j$ for $|z|<\rho$, where $\rho$ is a certain constant bigger than $1$.

Therefore, the mapping $$h_1(z) =\frac{1}{z}\overline{h'\left(\frac{1}{\bar z}\right)}= \sum_{j=0}^\infty \frac{j \overline{b_j}}{z^j}$$ is well defined holomorphic function in the domain $D_1=\{z:|z|>1/\rho\}$.

Since $\Gamma=\partial\Omega$ is rectifiable, for $z=re^{it}$, we have that $$F(z)= \partial_t f(re^{it})= i z g'(z) -i \overline{zh'(z)} \in h^1(\mathbf{D}),$$ (see e.g. \cite{kmm,MP}). Therefore, by having in mind the quasiconformality, we get that $g', h'\in H^1(\mathbf{D})$.   In particular, there exist non-tangential limits of those functions almost everywhere on $\mathbf{T}$. We recall that $ h^1(\mathbf{D})$ and  $H^1(\mathbf{D})$ are the Hardy classes of harmonic and holomorphic functions, respectively, defined in the unit disk $\mathbf{D}$.

 Let $$H(z) = i\left(z g'(z) - \frac{1}{z} h_1(z)\right),\ \ \  1/\rho<|z|<1.$$ Then, for almost every $t\in[-\pi,\pi]$, we have $$\lim_{r\to 1}H(re^{it})= \lim_{r\to 1} F(re^{it}).$$
Then there is a set of points $0<\varphi_1<\varphi_2<\varphi_3<\varphi_4<2\pi$ so that

\begin{equation}\label{fi12}\lim_{r\to 1}H(r e^{i\varphi_j})=H(e^{i\varphi_j}), \end{equation} exist for every $j=1,2,3,4$.

Let $1<R<\rho$ and let $S_1=\{z=r e^{i\phi}; \phi\in(\varphi_1,\varphi_4), r\in(1/R,1)\}$,  $S_2=\{z=r e^{i\phi}; \phi\in(\varphi_3,2\pi+\varphi_2), r\in(1/R,1)\}$ and let  $w=\Phi_j(z)$ be a conformal mapping of the unit disk onto the region  $S_j$ so that
 \begin{equation}\label{phi00}\Phi_1(0)= \frac{1}{2}\left(\frac{1}{R}+1\right) e^{i/2 (\varphi_1+\varphi_4)}, \ \  \Phi_2(0)= -\frac{1}{2}\left(\frac{1}{R}+1\right) e^{i/2 (\varphi_2+\varphi_3)}.\end{equation}
 Let $s_1, s_2, s_3, s_4\in[0,2\pi]$ so that $\varphi_1<s_1<s_2<\varphi_2$, and $\varphi_3<s_3<s_4<\varphi_4$. Then $$\{e^{is}:s\in(s_1,s_4)\cup (s_3,2\pi +s_2)\}=\mathbf{T}. $$

 Observe that $\mathbf{T}\subset D_1$.

 Define the holomorphic mapping
$K_j(z) = H(\Phi_j(z))$, $z\in \mathbf{D}$, $j=1,2$.
In view of \eqref{fi12}, we have that $H$ is bounded on the boundary arcs  $I_j=[1/R,1] e^{i\varphi_j}$, $j=1,4$ of $S$. Also it is clear that it is bounded in the inner arc. Therefore $K_j$ is a non-vanishing bounded analytic function defined in the unit disk.
Let $L_j(z) = \log K_j(z)$. Then for $j=1,2$ $$v_j(z)=\Im L_j(z) = \mathrm{arg}(K_j(z)),$$ is a bounded harmonic function, so that $\lim_{r\to 1} v_j(re^{it}) = v_j(e^{it})$ is a continuous function on the unit circle.

To show that $v$ is a bounded well-defined function, observe that $$H(z) = zg'\left(1-\frac{h_1(z)}{z^2}\right),$$ and so $$\mathrm{arg}\, H(z) =\mathrm{arg}\, (z g') +\mathrm{arg}\, \left(1-\frac{h_1(z)}{z^2 g'(z)}\right).$$ First of all  for $|z|$ close to $1$, the function $$\Re \left(1-\frac{h_1(z)}{z^2 g'(z)}\right)$$ is bigger than $1-(1+k)/2$, where $k$ is the constant of quasiconformality. On the other hand, in view of Proposition~\ref{lindel},  $i(g'- \overline{z h'}/z)= f_t(e^{it})/z$ has a continuous argument in the punctured disk $0<|z|\le1$. Since $\Re(1-\overline{z h'}/(z g'))>0$, we obtain that $\mathrm{arg}(g')$ is well-defined and bounded function close to the boundary of the unit disk.

We can also choose $R$ close enough to $1$ so that the variation of the argument:
\begin{equation}\label{closeen}
 \Delta_{\mathbf{T}} \mathrm{arg}K_j(e^{it})\le 1+ \Delta_{\mathbf{T}} \mathrm{arg} H_j(e^{is}).
\end{equation}

Assume that $\epsilon>0$ so that $\epsilon |p| <\pi/2$ and let \begin{equation}\label{polinom}P_j(t)=a_{j,0}+\sum_{m=1}^n c_m \cos mt + d_m \sin m t\end{equation} be a trigonometric polynomial so that \begin{equation}\label{vvv}|v_j(e^{it})-P_j(t)|\le \epsilon\end{equation} for $t\in[0,2\pi]$. Let $\Psi$ be the holomorphic function, so that $\Im (\Psi(e^{it})) = P_j(t)$ and $\Psi_j(0)=a_{j,0}$.

Observe that $$a_{j,0} = \frac{1}{2\pi}\int_{-\pi}^\pi P_j(t) dt$$ and
\begin{equation}\label{aj0}|a_{j,0}|\le \frac{1}{2\pi}\int_{-\pi}^\pi |P_j(t)| dt\le \epsilon + \frac{1}{2\pi}\int_{-\pi}^\pi |v_j(t)| dt.\end{equation}
Then for every $r\in(0,1)$ we have

 $$\int_{0}^{2\pi} e^{p (L_j(r e^{it})-\Psi(r e^{it}) )}\frac{\mathrm{d}t}{2\pi }=  e^{p (\Psi(0)- L_j(0))}.$$
 So by taking the real part and letting $r\to 1$ we get $$\int_{0}^{2\pi} e^{p \Re (L_j( e^{it})-\Psi(e^{it}) )}\cos \left\{p\Im \left[ L_j( e^{it})-\Psi(e^{it})\right]\right\}\frac{\mathrm{d}t}{2\pi }=\Re  e^{p (L_j(0)-\Psi(0))}.$$
Thus $$\int_{0}^{2\pi} e^{p \Re (L_j( e^{it})-\Psi(e^{it}) )}\frac{\mathrm{d}t}{2\pi }\le   \frac{\abs{\Re  e^{p (L_j(0)-\Psi(0))}}}{\cos p \epsilon}.$$ Therefore

$$\int_{0}^{2\pi} e^{p \Re (L_j( e^{it}))}\frac{\mathrm{d}t}{2\pi }\le  \max_{t\in[0,2\pi]}e^{p \Re (\Psi(e^{it}))} \frac{\abs{\Re  e^{p (L_j(0)-\Psi(0))}}}{\cos p\epsilon}=G_p.$$

The constant $G_p$ depends on the same parameters as the constant $E_p$ from \eqref{hardye} together with the constant of quasiconformality $k$, and this follows  from the fact that $\Psi(0)=a_{j,0}$, \eqref{phi00}, \eqref{aj0}, \eqref{closeen} and a Cauchy type inequality for $H(z)$ in the annulus $1/R<|z|<1$, where $1/R=(1/\rho+1)/2$. Here $\rho$ is a given constant bigger than $1$ as in the begging of the proof.

Since $p \Re L_j(z) = p \log |K_j(z)|$, it follows that $\exp (p\log |K_j(z)|)=|K_j(z)|^p$. Therefore $K_j\in H^p$. Now we have
\[\begin{split}\int_{\mathbf{T}}|H(e^{is})|^p \mathrm{d}s &\le \int_{\{e^{is}:s_1\le s\le s_4\}}|H(e^{is})|^p \mathrm{d}s+\int_{\{e^{is}:s_3\le s\le s_2+2\pi\}}|H(e^{is})|^p \mathrm{d}s\\&=\int_{\{e^{it}:t_1\le t\le t_4\}}|H(\Phi_1(e^{it}))|^p |\Phi_1'(e^{it})|\mathrm{d}t\\&+\int_{\{e^{it}:t_3\le t\le t_2+2\pi\}}|H(\Phi_2(e^{it}))|^p |\Phi_2'(e^{it})|\mathrm{d}t,\end{split}\]
where $\Phi_1(t_i)=s_i$, $i=1,4$, and $\Phi_2(t_i)=s_i$, $i=2,3$. Moreover $|\Phi_1'(e^{it})|$ is bounded on $\{e^{it}:t_1\le t\le t_4\}$ and $|\Phi_2'(e^{it})|$ on $\{e^{it}:t_3\le t\le t_2+2\pi\}$. Therefore \[\begin{split}\int_{\mathbf{T}}|H(e^{is})|^p \mathrm{d}s&\le C \int_{\{e^{it}:t_1\le t\le t_4\}}|H(\Phi_1(e^{it})|^p \mathrm{d}t\\&+C \int_{\{e^{it}:t_3\le t\le t_2+2\pi\}}|H(\Phi_2(e^{it})|^p \mathrm{d}t\le C(\|K_1\|_p^p+\|K_2\|_p^p)\\ &\le L_p^p<\infty.\end{split}\]

The constant $L_p$ depends on the same parameters as  $E_p$ from \eqref{hardye} and the quasiconformal constant $k$.

Thus $H\in H^p(\mathbf{D})$, and so $f_t\in h^p(\mathbf{D})$. Since $f$ is quasi-conformal, it follows that $g'\in H^p$.

\end{proof}

\begin{lemma}\label{lomba}
Let $\alpha\in(0,1)$. Then there is a positive constant $C(\alpha)>1$ satisfying the following property. If $f$ is a holomorphic function defined in the unit disk with continuous extension up to the boundary and if  $$X=\sup_{e^{it}\neq e^{is}}\frac{|f(e^{it}) - f(e^{is})|}{ |e^{it}-e^{is}|^\alpha}$$ and   $$Y= \sup_{|z|<1}(1-|z|)^{1-\alpha}|f'(z)|,$$ then \begin{equation}\label{ccal}\frac{1}{C(\alpha)}X\le Y \le C(\alpha) X.\end{equation}
\end{lemma}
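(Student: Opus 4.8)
The plan is to establish the two inequalities in \eqref{ccal} separately; both are classical Hardy--Littlewood type estimates and the work is in making the constants uniform in the stated form. For the inequality $Y\le C(\alpha)X$, I would fix $z$ with $|z|=r<1$ and use the Cauchy integral formula for $f'$ over a circle of radius $\tfrac{1-r}{2}$ centered at $z$, but first subtract a constant: since $f$ is holomorphic, $f'(z)=\frac{1}{2\pi i}\int_{|\zeta-z|=(1-r)/2}\frac{f(\zeta)-f(z/|z|\cdot r)}{(\zeta-z)^2}\,d\zeta$ is not quite what is wanted because the hypothesis on $X$ only controls boundary differences. A cleaner route is to write, for the radial point $w_0=z/|z|$ on $\mathbf{T}$,
\[
f(z)-f(w_0)=\int_{[z,w_0]} f'(\zeta)\,d\zeta,
\]
and instead bound $f'(z)$ by applying the Schwarz--Pick/Cauchy estimate to the difference quotient. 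Concretely, I expect the standard argument: the hypothesis $X<\infty$ together with the Poisson representation shows $|f(z)-f(w_0)|\le c_\alpha X(1-|z|)^\alpha$ for an absolute-in-$\alpha$ constant $c_\alpha$ (this is the easy direction of Hardy--Littlewood, obtained by integrating the Poisson kernel against $|e^{it}-w_0|^\alpha$); then a Cauchy estimate on the disk $\{|\zeta-z|<(1-r)/2\}$, on which $|f(\zeta)-f(w_0)|\lesssim X(1-r)^\alpha$, gives $|f'(z)|\lesssim X(1-r)^{\alpha-1}$, i.e. $(1-|z|)^\alpha|f'(z)|\le C(\alpha)X(1-|z|)$... wait, that overshoots; the correct bookkeeping gives exactly $(1-|z|)|f'(z)|\le C(\alpha)X(1-|z|)^\alpha$, hence $Y\le C(\alpha)X$.

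For the reverse inequality $X\le C(\alpha)Y$, I would take two boundary points $e^{it},e^{is}$, set $\delta=|e^{it}-e^{is}|$, and integrate $f'$ along a path built from the two radial segments $[(1-\delta)e^{it},e^{it}]$ and $[(1-\delta)e^{is},e^{is}]$ together with a short arc of the circle $|z|=1-\delta$ joining $(1-\delta)e^{it}$ to $(1-\delta)e^{is}$. Along the radial segments, $|f'(re^{i\theta})|\le Y(1-r)^{-\alpha}$, so each radial integral is bounded by $Y\int_{1-\delta}^{1}(1-r)^{-\alpha}\,dr=\frac{Y}{1-\alpha}\delta^{1-\alpha}$; along the circular arc, which has length $\le \pi\delta$ and on which $(1-|z|)^{-\alpha}=\delta^{-\alpha}$, the contribution is $\le \pi Y\delta\cdot\delta^{-\alpha}=\pi Y\delta^{1-\alpha}$. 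Summing, $|f(e^{it})-f(e^{is})|\le C(\alpha)Y\,\delta^{\alpha}=C(\alpha)Y|e^{it}-e^{is}|^\alpha$, which is $X\le C(\alpha)Y$. Here one uses that $f$ extends continuously to $\overline{\mathbf{D}}$ so that the limit of the path integral as the endpoints approach $\mathbf{T}$ gives the boundary difference; and one must check the elementary comparison $|e^{it}-e^{is}|\asymp |t-s|$ for $|t-s|\le\pi$ to pass freely between arc length and chord length.

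The main obstacle, such as it is, is the first (easy-direction) step: controlling $|f(z)-f(w_0)|$ by $X(1-|z|)^\alpha$ from the \emph{boundary} H\"older hypothesis alone, before any Cauchy estimate can be applied. This is handled by the identity $f(z)-f(w_0)=\int_0^{2\pi}P(z,\theta)\,(f(e^{i\theta})-f(w_0))\,d\theta$ (valid since $f=P[f|_{\mathbf T}]$ by continuity and harmonicity of the real and imaginary parts), followed by the estimate $\int_0^{2\pi}P(z,\theta)|e^{i\theta}-w_0|^\alpha\,d\theta\le C(\alpha)(1-|z|)^\alpha$, which is a routine splitting of the integral into the region $|e^{i\theta}-w_0|\le 1-|z|$ and its complement. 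Once this is in hand, both directions are short, and I would take $C(\alpha)$ in the statement to be the maximum of the two constants produced, which depends only on $\alpha$ (blowing up like $(1-\alpha)^{-1}$ as $\alpha\to1$ and like $\alpha^{-1}$ as $\alpha\to0$, consistent with the claim $C(\alpha)>1$).
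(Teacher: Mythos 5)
Your proposal follows essentially the same route as the paper: the direction bounding $f'$ is a Cauchy/Poisson-kernel estimate against the boundary H\"older modulus (the paper differentiates the Cauchy integral of the boundary values and computes the kernel integral explicitly, while you first derive $|f(z)-f(z/|z|)|\le c_\alpha X(1-|z|)^\alpha$ from the Poisson representation and then apply an interior Cauchy estimate on a disk of radius $(1-|z|)/2$ --- a harmless repackaging of the same computation), and the converse direction is the Hardy--Littlewood integration of $f'$ along two radii plus a circular arc, which the paper simply imports from Goluzin's proof and then patches for widely separated boundary points exactly as you do.

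The one genuine issue is the exponent bookkeeping: as written, your two halves do not prove the same two-sided inequality. Your first half ends with $(1-|z|)^{1-\alpha}|f'(z)|\le C(\alpha)X$, i.e.\ it controls the weight $1-\alpha$, not the weight $\alpha$ appearing in the statement's $Y$; your second half starts from $|f'(z)|\le Y(1-|z|)^{-\alpha}$ and your own computation yields $\bigl(\tfrac{2}{1-\alpha}+\pi\bigr)Y\,\delta^{1-\alpha}$, so the final line ``$\le C(\alpha)Y\delta^{\alpha}$'' is a silent and incorrect change of exponent. What your argument actually establishes is the correct classical equivalence $X\asymp\sup_{|z|<1}(1-|z|)^{1-\alpha}|f'(z)|$; the lemma with the weight $(1-|z|)^{\alpha}$ in $Y$ is false for $\alpha\neq 1/2$ (for $\alpha<1/2$ take $f(z)=(1-z)^{\alpha}$: then $X<\infty$ while $(1-r)^{\alpha}|f'(r)|=\alpha(1-r)^{2\alpha-1}\to\infty$). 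The paper's own proof has the same feature --- its displayed estimate bounds $(1-r)^{1-\alpha}|f'(z)|$, and its quotation of the Hardy--Littlewood theorem should carry the weight $(1-|z|)^{1-\alpha}$ --- so the exponent in $Y$ is evidently a typo in the statement; once you restate $Y$ with $(1-|z|)^{1-\alpha}$ and write $\delta^{1-\alpha}$ in your last step, your two directions are consistent, correct, and match the paper's proof.
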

\begin{remark}
We want to mention that a result similar to Lemma~\ref{lomba} is probably valid for the more general classes of mappings such as, real harmonic functions, or quasiconformal harmonic mappings, but we do not need such results (see e.g. \cite{MP1}).
\end{remark}
\begin{proof}
First we have for $z=re^{i\theta}$ that

$$f'(z) =\frac{1}{2\pi}\int_{-\pi}^\pi \frac{f(e^{it})e^{it} \mathrm{d}t}{(e^{it}-z)^2}=\frac{1}{2\pi} \int_{-\pi}^\pi \frac{(f(e^{it})-f(e^{i\theta}))e^{it} \mathrm{d}t}{(e^{it}-z)^2}.$$ Therefore

$$|f'(z)|\le \frac{1}{2\pi}\int_{-\pi}^\pi \frac{|f(e^{i(t+\theta)})-f(e^{i\theta})|}{1+r^2-2r \cos t} \mathrm{d}t .$$
By using the inequality $|e^{it}-1|\le |t|$, and introducing the change of variables $\varphi=2t\sqrt{r}/\pi $,  it follows that
\[
\begin{split}|f'(z)|&\le \frac{X}{\pi} \int_0^\pi \frac{t^\alpha}{(1-r)^2 +\frac{4r}{\pi^2}t^2} \mathrm{d}t
\\&=\frac{X\pi^\alpha}{2^{\alpha+1}}\frac{r^{-\frac{1+\alpha}{2}}}{(1-r)^{1-\alpha}}\int_0^{ \frac{2\sqrt{r}}{1-r}} \frac{\varphi^\alpha \mathrm{d}\varphi}{1+\varphi^2}
\\&\le\frac{X\pi^\alpha}{2^{\alpha+1}}\frac{r^{-\frac{1+\alpha}{2}}}{(1-r)^{1-\alpha}}\int_0^\infty \frac{\varphi^\alpha \mathrm{d}\varphi}{1+\varphi^2}.\end{split}\]
So for $r>1/2$ we have
$$(1-|z|)^{1-\alpha}|f'(z)|\le X \frac{\pi^\alpha}{2^{\alpha+1}} 2^{\frac{1+\alpha}{2}}\int_0^\infty \frac{\varphi^\alpha}{1+\varphi^2} \mathrm{d}s.$$
Thus, after length but elementary calculation we get that   $$(1-|z|)^{1-\alpha}|f'(z)|\le X   \frac{\pi ^{1+\alpha }}{2^{\frac{1+3 \alpha }{2}}} \sec\left[\frac{\pi  \alpha }{2}\right].$$
For $r<1/2$ we have  \[\begin{split}(1-|z|)^{1-\alpha} |f'(z)|&\le \frac{X (1-r)^\alpha}{\pi} \int_0^\pi \frac{t^\alpha}{(1-r)^2 +\frac{4r}{\pi^2}t^2} \mathrm{d}t \\&\le \frac{X (1-r)^\alpha}{\pi} \int_0^\pi \frac{t^\alpha}{(1-r)^2 } \mathrm{d}s\\&\le X 2^{2-\alpha}\frac{\pi^{\alpha}}{\alpha+1}. \end{split}\]
Conversely, by using the proof of Hardy-Littlewood theorem (\cite[Theorem~3,~p.~411]{G}) if $$(1-|z|)^{1-\alpha}|f'(z)|\le Y,$$ then for $|s-t|\le 1$ we get
$$|f(e^{it}) - f(e^{is})|\le  Y(2/\alpha+1) |t-s|^\alpha.$$
Therefore for $t,s\in [-\pi,\pi]$, by noticing that $e^{it}=e^{it +2\pi i }$, for the case $|t-s|>1$ or for the case $|2\pi - (t-s)|>1$ we get
\[\begin{split}|f(e^{it}) - f(e^{is})|&\le \sum_{j=1}^4 |f(e^{it_j}) - f(e^{it_{j-1}})|\\&\le  \sum_{j=1}^4 Y(2/\alpha+1) |t_j-t_{j-1}|^\alpha\le 4 Y(2/\alpha+1)|t-s|^\alpha.\end{split}\]
So \eqref{ccal} is satisfied for
$$C(\alpha) =\max\left\{2^{2-\alpha}\frac{\pi^{\alpha+1}}{\alpha+1}, \frac{\pi ^{1+\alpha }}{2^{\frac{1+3 \alpha }{2}}} \sec\left[\frac{\pi  \alpha }{2}\right], 4\left(\frac{2}{\alpha}+1\right)\right\}.$$
\end{proof}

\section{Proof of main result (Theorem~\ref{teoml})}
We divide the proof into two cases.
\begin{enumerate}
\item[a)] $D$ is the unit disk $\mathbf{D}$,

\item[ b)] $D$ is a general Jordan domain with a $\mathscr{C}^1$ boundary.
\end{enumerate}
a) Since $\gamma\in \mathscr{C}^1$, $\gamma$ has the following property. For every point $p\in\gamma$ there are complex numbers $|a|=1$ and $b$ so that the parametrisation of the curve \begin{equation}\label{gag}\gamma_p =a\cdot(\gamma-p)\end{equation} above the point $0$ has the form $\eta_p(x) =(x, \varphi_p(x)),$  so that $\varphi_p(0)=\varphi_p'(0)=0$.

Further for every $p$ and every  $\epsilon>0$, there is $\delta_0=\delta_0(\epsilon)$ so that   $$|\varphi_p(x) - \varphi_p(0)-\varphi_p'(0) x|\le \epsilon |x|, $$ for $|x|\le \delta$. Moreover, $\delta_0$ can be chosen to be independent on $p$. I.e. it depends on $\epsilon$ and $\gamma$ only.

Let $x(t) =\Re (f(e^{it}))$. Then locally $y(t)=\Im (f(e^{it}))=\varphi(x(t))$. Assume also that $x(0) = 0$ and $f(1)=(0,0)$. For fixed $\epsilon>0$, because of Theorem~\ref{lemba} there is $\delta>0$ ($\delta<1$) so that $|t|\le \delta$ implies $|x(t)|\le \delta_0$ and so that

\begin{equation}\label{cruci}|\varphi_p(x(t)) - \varphi_p(0)- \varphi_p'(0)x(t)|\le \epsilon |x(t)|.\end{equation}
Since $\varphi_p(0)=\varphi_p'(0)=0$ we get
\begin{equation}\label{cruci1}|\varphi_p(x(t)) |\le \epsilon |x(t)|, \ \ \  |t|\le \delta.\end{equation}
Let \begin{equation}\label{vv}v(z) = \Im f(z)=\Im (g+\bar h)=\Re (i (h(z)-g(z)))\end{equation} and \begin{equation}\label{uu}u(z) = \Re f(z) = \Re (g(z)+h(z)) .\end{equation}
Then by the Schwarz formula we get $$i (h(z)-g(z))= i \Im (h(0)-g(0))+\frac{1}{2\pi}\int_{-\pi}^\pi \frac{e^{is}+z}{e^{is}-z} \tilde v(s) \mathrm{d}s$$ where \begin{equation}\label{tildev}\tilde v(s) = \Re (i (h(e^{is})-g(e^{is}))).\end{equation}
Thus

\begin{equation}\label{divide}i (h'(z)-g'(z)) =\frac{1}{\pi}\int_{-\pi}^\pi \frac{\tilde v(s) - \tilde v(0)}{(e^{is}-z)^2} \mathrm{d}s.\end{equation}

From now on we divide the proof into two steps.

\subsection{Assume first  that $f$ is $\alpha^{1/2}-$H\"older continuous  and prove that the H\"older constant do not depend on $f$}
Since $f=g+\bar h$ is $\alpha^{1/2}-$H\"older continuous, then $(1-|z|)^{1-\alpha^{1/2}} (|h'|+|g'|)$ is bounded, and so  the following maximum $$A=\max_{|z|<1}(1-|z|)^{1-\alpha} |i(h'(z)-g'(z))|$$ is attained in a point of the unit disk. We can assume that $A= (1-\rho)^{1-\alpha} |i(h'(\rho)-g'(\rho))|$ for some $\rho\in [0,1)$.
Then we get $$B=\max_{|z|<1}(1-|z|)^{1-\alpha} (|h'(z)|+|g'(z)|)\le K A,$$ where $K$ is the constant of the quasiconformality. In particular, from Lemma~\ref{lomba}, $h$ and $g$ are $\alpha-$H\"older's continuous on the boundary $\mathbf{T}$. More precisely

$$|h(e^{it})-h(e^{is})|\le KA C(\alpha)|e^{it}-e^{is}|^\alpha$$ and

$$|g(e^{it})-g(e^{is})|\le KA C(\alpha)|e^{it}-e^{is}|^\alpha.$$

Therefore $$|f(e^{it})-f(e^{is})|\le 2KA C(\alpha)|e^{it}-e^{is}|^\alpha.$$
In particular for $\tilde u (s) = \Re (f(e^{is}))=\Re (g(e^{it})+h(e^{it})))$ we have
\begin{equation}\label{inpart}|\tilde u(s)-\tilde u(0)|\le 2K A C(\alpha)|s|^\alpha.\end{equation}

Then, having in mind that for $t\in (-\delta,\delta)$, $\tilde v(t) = \varphi(\tilde u(t))$, from \eqref{divide}, \eqref{cruci} and the proof of Lemma~\ref{lomba},  we get

\[\begin{split}|i (h'(\rho)-g'(\rho))|(1-\rho)^{1-\alpha} &\le (1-\rho)^{1-\alpha}\int_{-\pi}^\pi \frac{|\tilde v(s) - \tilde v(0)|}{\rho^2-2\rho \cos s +1} \frac{\mathrm{d}s}{\pi}\\&=(1-\rho)^{1-\alpha}\int_{[-\delta,\delta]} \frac{|\tilde v(s) - \tilde v(0)|}{\rho^2-2\rho \cos s +1} \frac{\mathrm{d}s}{\pi}\\&+(1-\rho)^{1-\alpha}\int_{[-\pi,\pi]\setminus[-\delta,\delta]} \frac{|\tilde v(s) - \tilde v(0)|}{\rho^2-2\rho \cos s +1} \frac{\mathrm{d}s}{\pi}
\\&\le2\epsilon K A C(\alpha)\int_{[-\delta,\delta]} \frac{(1-\rho)^{1-\alpha}|s|^\alpha}{\rho^2-2\rho \cos s +1} \frac{\mathrm{d}s}{\pi}+Z
\\&\le 2 \epsilon K A C^2(\alpha)+Z,
\end{split}
\]
 where

$$Z=(1-\rho)^{1-\alpha}\int_{[-\pi,\pi]\setminus[-\delta,\delta]} \frac{|\tilde v(s) - \tilde v(0)|}{\rho^2-2\rho \cos s +1} \frac{\mathrm{d}s}{\pi}.$$
Further $$Z\le \mathrm{diam}(\Omega)\frac{2\pi }{\pi}\frac{1}{1+\cos^2\delta - 2\cos \delta \cdot \cos \delta}=\frac{2\mathrm{diam}(\Omega)}{\sin^2\delta}.$$
So $$A\le 2 \epsilon K A C^2(\alpha)+X\le 2 \epsilon K A C^2(\alpha)+\frac{2\mathrm{diam}(\Omega)}{\sin^2\delta}.$$
By choosing  $\epsilon>0$ so that  $$2 \epsilon K A C^2(\alpha)<A/2,$$ we get
\begin{equation}\label{adelta}A\le  \frac{4\mathrm{diam}(\Omega)}{\sin^2\delta}.\end{equation}

Observe that $\delta$, and so $A$ depends on $K$, $\gamma$, $\alpha$ and modulus of continuity of $f$ at the boundary, but not on a specific point $z\in \mathbf{D}$.

\subsection{Let us remove the assumptions $f$ is $\sqrt{\alpha}-$ H\"older continuous and use Approximation argument}

If $p\in \partial \Omega= \gamma$ and $\gamma\in \mathscr{C}^1$, then, after possible rotation and translation of $\Omega$ (similarly as in \eqref{gag}), which preserves the harmonicity and the quasiconformal constant of the corresponding mapping, we can assume that $p=0$ and the unit normal vector is $N_p=(1,0)$. So we can find a sub-arc of $\gamma$ containing $p$ at its interior which  is the graphic of a function defined as follows $$\gamma_p(\eta)=\{(x,\phi(x)): x\in (-\eta,\eta)\}.$$ We also can assume that $\eta>0$ is a positive constant that depends only on $\gamma$ but not on the specific point $p$. Then we have $\phi'(0)=0$. Let $\Omega_p\subset \Omega$ be a Jordan domain bounded by a $\mathscr{C}^1$ Jordan curve $\Gamma_p$ consisted of $\gamma_p(\eta/2)$ and an interior part, which we denote by  $\chi_p(\eta)$, which is subset of $\Omega$ and assume that $a_p\in\Omega_p$ be a fixed point. Then for small enough $\sigma=\sigma(\gamma)>0$, the domain $\Omega_p(\kappa)=\Omega_p - \kappa N_p$ is a subset  of $ \Omega$, for every $\kappa\in[0,\sigma]$.

Let $\Phi_{p,\kappa}: \mathbf{D}\to f^{-1}(\Omega_p(\kappa))$ be a conformal mapping so that $$\Phi_{p,\kappa}(0) =f^{-1}(a_p-\kappa N_p).$$

Since $\mathbf{T}$ is compact, there is a finite family of Jordan domains $\Omega_{p_j},\ \ j=1,\dots,n$ so that $T_j:=f^{-1}(\partial \Omega\cap \partial \Omega_{p_j})$, $j=1,\dots,n$ covers $\mathbf{T}$. Moreover, $f\circ \Phi_{p_j,\kappa}: \mathbf{D}\to \Omega_{p_j}$ is $\sqrt{\alpha}-$H\"older continuous in $\mathbf{D}$, because $f$ is smooth in $\Phi_{p_j,\kappa}( \mathbf{D})$ and $\Phi_{p_j,\kappa}$ is ${\alpha}^{1/2}-$H\"older continuous because of Corollary~\ref{rrje}. Further, in view of the first case, there is a constant $A_{p_j}$ (see Lemma~\ref{lomba} and \eqref{adelta}) which depends only on $\Omega_{p_j}$ and $\alpha$ so that
  $$|f\circ \Phi_{p_j,\kappa}(e^{it})-f\circ \Phi_{p_j,\kappa}(e^{is})|\le A_{p_j} |e^{is}-e^{it}|^{\alpha^{2/3}}.$$  Note that $A_{p_j}$ also depends on the modulus of continuity of $f\circ \Phi_{p_j,\kappa}$ where $ \kappa\in[0,\sigma]$, but this family is uniformly continuous, and we can choose modulus of continuity that does not depend on $\kappa$, so $A_{p_j}$ will not depend on $\kappa$ either. Namely the $K-$quasiconformal mappings $G_\kappa:=\kappa+ f\circ \Phi_{p_j,\kappa}$, $\kappa\in[0,\sigma]$, map the unit disk onto $\Omega_{p_j}\in \mathscr{C}^1$ and satisfy the condition $G_\kappa(0)=a_{p_j}$.  By letting $\kappa\to 0$ we get

   $$|f\circ \Phi_{p_j,0}(e^{it})-f\circ \Phi_{p_j,0}(e^{is})|\le A_{p_j} |e^{is}-e^{it}|^{{\alpha}^{2/3}}.$$

  Therefore, by having in mind the fact that $\Phi^{-1}_{p_j,0}$ is $\alpha^{1/3}-$H\"older continuous on $T_j$ (in view of Corollary~\ref{rrje}), we conclude that $f$ is $\alpha-$H\"older continuous in $T_j'\subset T_j$, where $T_j'$ is a little bit smaller arc, but so that $\mathbf{T}\subset \bigcup_{j=1}^n T_j'$. Thus, $f$ is $\alpha-$H\"older continuous in $\mathbf{T}$. By the standard argument we now obtain that $f$ is $\alpha-$H\"older continuous in $\mathbf{D}$, concluding the case a).

  Notice that $\alpha>0$ is an arbitrary number smaller than $1$, so $f$ is also $\alpha^{1/2}-$H\"older continuous.

Hence, if we want to get more explicit estimate of $A$, then we repeat one more time the procedure proceed  in the previous subsection, but with $$A=\sup_{|z|<1}(1-|z|)^{1-\alpha}|i(g'(z)-h'(z))|,$$ and thus we get the estimate \begin{equation}\label{adelta1}A-\varepsilon\le  \frac{4\mathrm{diam}(\Omega)}{\sin^2\delta},\end{equation} instead of \eqref{adelta} for arbitrary $\varepsilon>0$, and thus \eqref{adelta} is valid also in this case. Further,
\[\begin{split}|Df(z)|=(|g'(z)|+|h'(z)|)&\le K (|g'(z)-h'(z)|)\\&\le K A (1-|z|)^{1-\alpha},\end{split}\] and so that
\begin{equation}\label{diskineq}\begin{split}\int_{\mathbf{D}}|Df(z)|^p \mathrm{d}\lambda(z) &\le K^p\int_{\mathbf{D}}A^p(1-|z|)^{(1-\alpha) p}\mathrm{d}\lambda(z)\\&= \frac{2  \pi K^p A^p }{2-3 (1-\alpha) p+(1-\alpha)^2 p^2}=C^p_{p,\alpha},\end{split}\end{equation} for $(1-\alpha) p<1$.  For example, by choosing $\alpha=1-1/(2p)$, we get $$C_p^p=   \frac{8}{3}\pi K^p A^p .$$

b) The H\"older continuity follows from the case a) and Corollary~\ref{rrje}. To deal with the integral, we use the change of variables. Namely, let $\phi:\mathbf{D}\to D$ be a biholomorphism so that $\phi(0)=a$. Then by using H\"older's inequality, isoperimetric inequality and relations  \eqref{hardye} and \eqref{diskineq}  we get  \[\begin{split}\int_{D}|Df(z)|^p \mathrm{d}\lambda(z)&=\int_{\mathbf{D}}(|Df(\phi(\zeta))|\cdot|\phi'(\zeta)|)^p |\phi'(\zeta)|^{2-p} \mathrm{d}\lambda(\zeta)\\&=\int_{\mathbf{D}}(|Df(\phi(\zeta))|\cdot|\phi'(\zeta)|)^p |\phi'(\zeta)|^{2-p} \mathrm{d}\lambda(\zeta)\\&\le \left(\int_{\mathbf{D}}(|Df(\phi(\zeta))|\cdot|\phi'(\zeta)|)^q \mathrm{d}\lambda(\zeta)\right)^{p/q}\\ & \ \ \ \ \times \left(\int_{\mathbf{D}} |\phi'(\zeta)|^{(2-p)q'} \mathrm{d}\lambda(\zeta)\right)^{1/q'}\\ &
\le C_q^{p} \cdot \frac{1}{(4\pi)^{1/q'}} \left(\int_{\mathbf{T}} |\phi'(\zeta)|^{(1-p/2)q'} \mathrm{d}\lambda(\zeta)\right)^{2/q'}\\&\le C_q^{p} \cdot \frac{1}{(4\pi)^{1/q'}} \left(E_{(1-p/2)q'}\right)^{2-p}=B^p_p,\end{split}\] where $1/q+1/q'=1$, and $q=p+1$.

\section*{Acknowledgement}  I am very grateful to the referee   for numerous typographic and stylistic corrections that have improved this paper.

\end{document}